\newtheorem{definition}{Definition}[section]
\newtheorem{theorem}[definition]{Theorem}
\newtheorem{lemma}[definition]{Lemma}
\newtheorem{corollary}[definition]{Corollary}
\theoremstyle{remark}
\newtheorem{remark}[definition]{Remark}
\numberwithin{equation}{section}
\title{Global weighted gradient estimates for nonlinear p-Laplacian type elliptic equations and its application}
\author{Xuehui Hao \\
School of Mathematical Sciences \\
Nankai University, Tianjin, 300071, China\\
e-mail:\,2120170064@mail.nankai.edu.cn} 
\date{\today}
\begin{document}

\maketitle
\begin{abstract}
We obtain the global weighted $W^{1,p}$ estimates for weak solutions of nonlinear elliptic equations over Reifenberg flat domains. Where nonlinearity $A(x,z,\xi)$ is assumed to be local uniform continuous in $z$ and have small BMO semi-norm in $x$. Moreover, we derive Besov regularity for solutions of a class of special harmonic equations by making use of $W^{1,p}$ estimate. \\

Keywords: global weighted $W^{1,p}$ estimates; quasilinear equations; Besov regularity\\
\end{abstract}


\section{Introduction and main results.}\label{section1}

\subsection{Introduction.}
In this paper we consider the following nonlinear elliptic equations:

\begin{equation}\label{model}
  \left\{\begin{array}{r@{\  }c@{\  }ll}
  \operatorname{div}A(x,u,\nabla u)&=&\operatorname{div}\left(|F|^{p-2}F\right)  & \mbox{in}\ \ \Omega\,, \\[0.05cm]
 u&=&0  & \mbox{on}\ \ \partial \Omega. \\[0.05cm]
\end{array}\right.
\end{equation}
where $p\in(1,\infty)$, $\Omega\subset\mathbb{R}^{n}$, $n\geq2$ is a bounded and generally irregular domain. $F$ is a given measurable vector field function.
The solution $u:\Omega\longrightarrow\mathbb{R}$ is a real-valued unknown function. The nonlinearity  $A=A(x,z,\xi):\Omega\times R\times\mathbb{R}^{n}\rightarrow\mathbb{R}^{n}$ is differentiable with respect to $\xi\neq0$. Moreover, $A(x,z,\xi)$ is assumed to have local uniform continuity in $z$, i.e.
\begin{equation}\label{c-condition}
  |A(x,z_{1},\xi)-A(x,z_{2},\xi)|\leq\omega_{M}(|z_{1}-z_{2}|)|\xi|^{p-1}
\end{equation}
for almost every $x\in\Omega$, all $z_{1},z_{2}\in[-M,M]$. Where $\omega_{M}:\mathbb{R}^{+}\rightarrow\mathbb{R}^{+}$ is modulus of continuity with  $\lim\limits_{\rho\rightarrow0^{+}}\omega_{M}(\rho)=0$, monotonically non-decreasing and concave. And we further assume that there exists a constant $\Lambda>0$ such that
\begin{equation}\label{growth}
  \left\{
   \begin{array}{c}
  |A(x,z,\xi)|+|\partial_{\xi}A(x,z,\xi)||\xi| \leq\Lambda|\xi|^{p-1}   \\
  \left< \partial_{\xi}A(x,z,\xi)\zeta,\zeta \right> \geq \Lambda^{-1}|\xi|^{p-2}|\zeta|^{2}.  \\
   \end{array}
  \right.
  \end{equation}
for almost every $x\in \Omega$, all $z\in\mathbb{R}$ and all $\xi,\zeta \in \mathbb{R}^{n}\backslash\{0\}$. Furthermore, we require some more regularity on nonlinearity, namely we assume $A(x,z,\xi)$ is measurable in $\Omega$ for every $(z,\xi)\in\mathbb{R}\times\mathbb{R}^{n}\setminus\{0\}$ and has a sufficiently small $BMO$ (bounded mean oscillation) semi-norm in $x$. More precise description of these structural requirements will be given in the next subsection. As usual, we consider a function $u\in W_{0}^{1,p}(\Omega)$, which is a weak solution of \eqref{model} with $F\in L^{p}(\Omega, \mathbb{R}^{n})$, if
\begin{equation*}
  \int_{\Omega}\left<A(x,u,\nabla u),\nabla \varphi\right>\operatorname{d}\!x=\int_{\Omega}\left<|F|^{p-2}F,\nabla \varphi\right>\operatorname{d}\!x
\end{equation*}
for any test function $\varphi\in W_{0}^{1,p}(\Omega)$.

As a classical topic in the regularity theory of solutions to partial differential equations and systems, Calder\'on-Zygmund theory has been the theme of a number of contributions with different peculiarities. This theory traces its origins back to works of Calder\'on and Zygmund \cite{Calderon1952On} in 1950s. They proved the $L^{p}$-estimate for the gradient of solutions to linear elliptic equations in the whole $\mathbb{R}^{n}$ by establishing the standard Calder\'on-Zygmund theory of singular integrals. As for the case of parabolic equations, that's Fabes's contribution \cite{Fabes1966Singular}. For the nonlinear Calder\'on-Zymund theory, Iwaniec \cite{iwaniec1983projections} first derived the Calder\'on-Zymund estimates for the $p$-Laplace equations via the sharp maximal operators and priori regularity estimates. As for weighted case, Mengesha and Phuc obtained the global regularity estimates in weighted Lorentz spaces, see \cite{mengesha2012global}.Caffarelli and Peral \cite{Caffarelli1998On} obtained the $W^{1,p}$ regularity of solutions to fully nonlinear elliptic equations. In the case when $A=A(x,\nabla u)$, the results has been obtained by many researchers, see \cite{byun2012nonlinear} for classical Lebesgue spaces and \cite{Byun2013Global} for weighted Lebesgue spaces. As for the case $A(x,u,\nabla u)$, the authors succeeded to obtain interior gradient estimates when $u$ is bounded, see \cite{nguyen2016interior}. In the recent paper \cite{byun2018global}, the authors obtained global gradient estimates of \eqref{model} for classical Lebesgue spaces in the case when $u\in L^{\infty}(\Omega)$.

As for Besov regularity, see \cite{clop2019besov}\cite{ma2019higher}, in which the case that $A$ is independent on $z$ and corresponding obstacle problems have been studied. In the process, Calder\'on-Zygmund estimate play a crucial role.

The present article is a natural outgrowth of \cite{byun2018global} and deals with global weighted $W^{1,p}$ theory for \eqref{model}. In particular, we derive an extended version of the global $W^{1,p}$ estimate in the settings of the weighted Lorentz space. At the end of the paper, we derive Besov regularity for solutions of a class of special harmonic equations by making use of Calder\'on-Zygmund estimate.

This paper is organized as follows. In the next subsection, we give some notations and precise statement of the main results. In Section\ref{section2}, we state some elementary estimates which will be used frequently in the paper. In Section\ref{section3} we present weighted good-$\lambda$ type inequality that will be essential for the proof of the main theorem. In Section\ref{section4}, the desired global weighted estimate is obtain. The last section contains the proof of Besov regularity for solutions.

\subsection{Notations and main results.}
Let us start by introducing a few notations to be used in what follows.

Throughout the paper, we denote by $|U|$ the integral $\int_{U}\operatorname{d}\!x$ for every measurable set $U\subset\mathbb{R}^{n}$. For an open set $\Omega\subset\mathbb{R}^{n}$, $\Omega_{\rho}(x)\triangleq\Omega\cap B_{\rho}(x)$, where $B_{\rho}(x)$ is a n-dimensional open ball. For the sake of convenience and simplicity, we employ the letter $C>0$ to denote any constants which can be explicitly computed in terms of known quantities such as $n,p,q$. Thus the exact value denoted by $C$ may change from line to line in a given computation.

To measure the oscillation of $A(x,z,\xi)$ in $x$-variables on $B_{\rho}(y)$, we consider a function defined by
\begin{equation}\label{def of theta}
  \theta \left(A,B_{\rho}(y)\right)(x,z)=\sup_{\xi\in\mathbb{R}^{n}\setminus\{0\}}\frac{|A(x,z,\xi)-\bar{A}_{B_{\rho}(y)}(z,\xi)|}{|\xi|^{p-1}}
\end{equation}
where
\begin{equation*}
  \bar{A}_{B_{\rho}(y)}(z,\xi)=\fint_{B_{\rho}(y)}A(x,z,\xi)\operatorname{d}\!x
\end{equation*}

In order to state our main results, we introduce the following definitions.

\begin{definition}
The domain is said to be $(\delta,R)$-Reifenberg flat if there exist postive constants $\delta$ and $R$ with the property that for each $x_{0}\in\partial\Omega$ and each $\rho\in(0,R)$, there exist a local coordinate system $\{x_{1},\cdots,x_{n}\}$ with origin at the point $x_{0}$ such that
\begin{equation*}
  B_{\rho}(x_{0})\cap\{x:x_{n}>\rho\delta\}\subset B_{\rho}(x_{0})\cap\Omega\subset B_{\rho}(x_{0})\cap\{x:x_{n}>-\rho\delta\}
\end{equation*}

\end{definition}
\begin{definition}
Let $1<q<\infty$, a non-negative, locally integrable function $\omega:\mathbb{R}\rightarrow [0, \infty)$ is said to be in the class $A_{q}$ of Muckenhoupt weight if
\begin{equation*}
  [\omega]_{q}:=\sup_{balls B\subset \mathbb{R}^{n}}\left(\fint_{B}\omega(x)\operatorname{d}\!x\right)\left(\fint_{B}\omega(x)^{\frac{1}{1-q}}\operatorname{d}\!x\right)^{q-1}<+\infty.
\end{equation*}
\end{definition}

\begin{definition}
The weighted Lorentz space $L_{\omega}^{q,t}(\Omega)$ with $0<q<\infty$, $0<t\leq\infty$, is the set of measurable functions $g$ on $\Omega$ such that
\begin{equation*}
  \|g\|_{L_{\omega}^{q,t}(\Omega)}:=\left(q\int_{0}^\infty\left(\alpha^{q}\omega(\{x\in\Omega:|g(x)|>\alpha\})\right)^{\frac{t}{q}}
  \frac{\operatorname{d}\!\alpha}{\alpha}\right)^{\frac{1}{t}}<+\infty
\end{equation*}
when $t\neq\infty$; for $t=\infty$ the space $L_{\omega}^{q,\infty}(\Omega)$ is set to be the usual Marcinkiewica space with quasinorm
\begin{equation*}
  \|g\|_{L_{\omega}^{q,\infty}(\Omega)}:=\sup_{\alpha>0}\alpha\omega(\{x\in\Omega:|g(x)|>\alpha\})^{\frac{1}{q}}.
\end{equation*}
\end{definition}

\begin{remark}
When $t=q$, the Lorentz space $L_{\omega}^{q,q}(\Omega)$ is equivalent to weighted Lebesgue space $L_{\omega}^{q}(\Omega)$, whose norm is defined by
$$\|g\|_{L_{\omega}^{q}(\Omega)}:=\left(\int_{\Omega}|g(x)|^{q}\omega(x)\operatorname{d}\!x\right)^{\frac{1}{q}}$$
\end{remark}

The main result of this paper is the following global regularity estimates for weak solutions of \eqref{model} in weighted Lorentz space.
\begin{theorem}\label{1-1}
Let $p,q,\gamma\geq1$. Then, there exists a sufficiently small constant $\delta=\delta(p,q,n,\Lambda,\gamma,M,\omega_{M})>0$ such that the following statement holds true. For a given vector field $F\in L_{\omega}^{pq,t}(\Omega,\mathbb{R}^{n})$, $0<t\leq\infty$, if $u\in W_{0}^{1,p}(\Omega)\cap L^{\infty}(\Omega)$ satisfying $\|u\|_{L^{\infty}(\Omega)}\leq M$  is a weak solution of \eqref{model} with $A(x,z,\xi)$ satisfying \eqref{c-condition}, \eqref{growth} and
\begin{equation}\label{small BMO}
  \sup_{-M\leq z\leq M}\sup_{0<\rho\leq R}\sup_{y\in \mathbb{R}^{n}}\fint_{B_{\rho}(y)}\theta\left(A,B_{\rho}(y)\right)(x,z)\operatorname{d}\!x\leq\delta
\end{equation}
for some $R>0$. $\Omega$ is $(\delta,R)$-Reifenberg flat. Then the following weighted regularity estimate holds.
\begin{equation*}
  \|\nabla u\|_{L_{\omega}^{pq,t}(\Omega)}\leq C\|F\|_{L_{\omega}^{pq,t}(\Omega)}
\end{equation*}
where $\omega\in A_{q}$ with $[\omega]_{q}\leq\gamma$, $\theta\left(A,B_{\rho}(y)\right)$ is defined in \eqref{def of theta} and $C$ is a constant depending on $n$, $p$, $q$, $\Lambda$, $\gamma$, $M$, $\omega_{M}$, $\Omega$.
\end{theorem}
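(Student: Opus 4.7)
The plan is to combine a comparison scheme in the spirit of Byun--Wang with a weighted good-$\lambda$ argument, adapting the recent work of \cite{byun2018global} from classical Lebesgue spaces to the weighted Lorentz setting. The overall structure should proceed in four main stages: freezing the $u$-dependence, comparison with reference problems, a density/level-set estimate, and finally the weighted upgrade.

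First I would freeze the $z$-variable. Since $\|u\|_{L^{\infty}(\Omega)}\leq M$, on any ball $B_{\rho}$ of small radius one has $|u(x)-(u)_{B_{\rho}}|\leq 2M$ and, via the higher integrability of $\nabla u$ (coupled with Poincar\'e), the oscillation of $u$ on $B_{\rho}$ tends to $0$ as $\rho\to 0$. Replacing $A(x,u,\xi)$ by the frozen nonlinearity $\tilde{A}(x,\xi):=A(x,(u)_{B_{\rho}},\xi)$ and using hypothesis \eqref{c-condition} will produce an error of the form $\omega_{M}(\mathrm{osc}_{B_{\rho}}u)|\nabla u|^{p-1}$, which is absorbed as a small perturbation exactly because $\omega_{M}(\cdot)\to 0$. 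This is the step that is new compared with the $A=A(x,\nabla u)$ case, and it is where the \emph{main obstacle} lies: to make $\omega_{M}(\mathrm{osc}_{B_{\rho}}u)$ small uniformly requires using higher integrability/reverse H\"older and controlling it independently of the point around which we localise. I expect to handle this via an intermediate comparison estimate of Caccioppoli/reverse-H\"older type, controlling $\mathrm{osc}_{B_{\rho}}u$ by $\rho^{\alpha}$ for some $\alpha>0$ depending on $p,n,\Lambda,M$.

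Second, once $\tilde{A}$ is independent of $z$, I would run the standard two-step comparison on intrinsic balls $B_{2\rho}\subset\Omega$ and on boundary balls (using the $(\delta,R)$-Reifenberg flatness). In the interior I compare $u$ to the unique $v\in u+W_{0}^{1,p}(B_{2\rho})$ solving $\mathrm{div}\,\tilde{A}(x,\nabla v)=0$, obtaining
\begin{equation*}
\fint_{B_{2\rho}}|\nabla u-\nabla v|^{p}\,dx\leq C\bigl(\omega_{M}(\mathrm{osc}_{B_{\rho}}u)+[\text{small BMO}]\bigr)\fint_{B_{2\rho}}|\nabla u|^{p}dx+C\fint_{B_{2\rho}}|F|^{p}dx,
\end{equation*}
and then compare $v$ to $w$ solving the equation with the averaged nonlinearity $\bar{\tilde A}_{B_{2\rho}}(\nabla w)=0$; the assumed smallness \eqref{small BMO} makes $\|\nabla v-\nabla w\|_{L^{p}}$ small, and $w$ enjoys known Lipschitz estimates. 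The boundary case is analogous after straightening via Reifenberg flatness, using the reference problem in a half-ball.

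Third, these comparison estimates, together with the $L^{\infty}$ bound on $\nabla w$, give a standard density estimate on upper level sets of the Hardy--Littlewood maximal function $\mathcal{M}(|\nabla u|^{p})$: there exist constants $N>1$, small $\varepsilon_{0}$ and a threshold $\lambda_{0}$ such that for $\varepsilon\in(0,\varepsilon_{0}]$ one has
\begin{equation*}
\bigl|\{\mathcal{M}(|\nabla u|^{p})>N\lambda\}\cap\Omega\bigr|\leq \varepsilon\bigl|\{\mathcal{M}(|\nabla u|^{p})>\lambda\}\cap\Omega\bigr|+\bigl|\{\mathcal{M}(|F|^{p})>\eta\lambda\}\cap\Omega\bigr|,
\end{equation*}
for every $\lambda>\lambda_{0}$, by a Vitali covering argument as described in the promised Section~3.

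Finally, I would upgrade this to the weighted Lorentz norm. Using the $A_{q}$ hypothesis with $[\omega]_{q}\leq\gamma$ and the doubling/strong-type $(q,q)$ bounds for $\mathcal{M}$ on $L_{\omega}^{q}$, the above density estimate self-improves to the weighted good-$\lambda$ inequality
\begin{equation*}
\omega\bigl(\{\mathcal{M}(|\nabla u|^{p})>N\lambda\}\bigr)\leq C\varepsilon^{\kappa}\omega\bigl(\{\mathcal{M}(|\nabla u|^{p})>\lambda\}\bigr)+\omega\bigl(\{\mathcal{M}(|F|^{p})>\eta\lambda\}\bigr),
\end{equation*}
with $\kappa$ depending on $[\omega]_{q}$. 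Integrating the level-set formulation of the weighted Lorentz quasinorm against this inequality, choosing $\varepsilon$ so that $C\varepsilon^{\kappa}N^{q}<\tfrac{1}{2}$, and invoking the boundedness of $\mathcal{M}$ on $L^{q,t}_{\omega}$ yields
\begin{equation*}
\|\nabla u\|_{L^{pq,t}_{\omega}(\Omega)}\leq C\|F\|_{L^{pq,t}_{\omega}(\Omega)},
\end{equation*}
which is the claim. The constant $\delta$ in the hypotheses is fixed in the density step, so it depends only on $p,q,n,\Lambda,\gamma,M,\omega_{M}$, matching the statement.
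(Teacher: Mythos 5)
Your proposal follows essentially the same route as the paper: freezing the $z$-variable via the $L^\infty$ bound and comparing to a frozen (and then averaged) nonlinearity on interior and Reifenberg-boundary balls to produce comparison maps with Lipschitz gradients (the paper imports these directly as Lemmas~\ref{comparison1} and \ref{comparison2} from \cite{byun2018global} rather than re-deriving them), then obtaining a density estimate on level sets of $\mathcal{M}(|\nabla u|^p)$, upgrading it to a weighted good-$\lambda$ inequality via $A_q$ doubling and the Krylov--Safonov covering lemma, iterating, and summing the Lorentz-quasinorm level-set formulation using the boundedness of $\mathcal{M}$ on $L^{q,t}_\omega$. The only piece you gloss over is the initial normalization: the paper fixes $\mu=\tilde C\|\nabla u\|_{L^p(\Omega)}$ and verifies the base-case hypothesis $\omega(E)\le\epsilon\,\omega(B_r(y_i))$ via the global energy estimate of Lemma~\ref{5-1} together with the weak $(1,1)$ bound, which is what makes the whole iteration legitimate; otherwise the plans coincide.
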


As for the interior case, the proof is similar to that of  global case. Thus, we only state the result.
\begin{theorem}\label{1-1-1}
Let $p,q,\gamma\geq1$. Then, there exists a sufficiently small constant $\delta=\delta(p,q,n,\Lambda,\gamma,M,\omega_{M})>0$ such that the following statement holds true. For a given vector field $F\in L_{\omega}^{pq,t}(B_{2R},\mathbb{R}^{n})$, $0<t\leq\infty$, if $u\in W_{loc}^{1,p}(B_{2R})\cap L^{\infty}(B_{2R})$ satisfying $\|u\|_{L^{\infty}(B_{2R})}\leq M$  is a weak solution of

\begin{equation*}
  \operatorname{div}A(x,u,\nabla u)=\operatorname{div}\left(|F|^{p-2}F\right)  \quad\quad \mbox{in}\ \ B_{2R}\,
\end{equation*}
with $A(x,z,\xi)$ satisfying \eqref{c-condition}, \eqref{growth} and
\begin{equation}\label{small BMO}
  \sup_{-M\leq z\leq M}\sup_{0<\rho\leq R}\sup_{y\in B_{R}}\fint_{B_{\rho}(y)}\theta\left(A,B_{\rho}(y)\right)(x,z)\operatorname{d}\!x\leq\delta
\end{equation}
for some $R>0$.  Then the following weighted regularity estimate holds.
\begin{equation*}
  \|\nabla u\|_{L_{\omega}^{pq,t}(B_{R})}\leq C\left(\|F\|_{L_{\omega}^{pq,t}(B_{2R})}+\omega(B_{2R})^{1/pq}\left(\fint_{B_{2R}}|\nabla u|^{p}\operatorname{d}\!x\right)^{1/p}\right)
\end{equation*}
where $\omega\in A_{q}$ with $[\omega]_{q}\leq\gamma$, $\theta\left(A,B_{\rho}(y)\right)$ is defined in \eqref{def of theta} and $C$ is a constant depending on $n$, $p$, $q$, $\Lambda$, $\gamma$, $M$, $\omega_{M}$, $R$.
\end{theorem}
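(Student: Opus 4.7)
My plan is to mimic the global strategy used for Theorem \ref{1-1}, but localize everything to the ball $B_{2R}$ with the help of cut-off balls and absorb the boundary contribution into the extra $\fint_{B_{2R}}|\nabla u|^p$ term on the right-hand side. The backbone of the argument is a density/good-$\lambda$ estimate for the Hardy--Littlewood maximal function $\mathcal{M}(|\nabla u|^{p}\chi_{B_{2R}})$, which, once established, feeds into the weighted Muckenhoupt machinery and yields the claimed weighted Lorentz bound on $B_R$ by a standard level-set decomposition.

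To set up the density estimate, I would fix a scale $\sigma\in(0,R)$ and consider any ball $B_{r}(y)$ with $y\in B_R$ and $r$ small enough that $B_{8r}(y)\Subset B_{2R}$. On $B_{4r}(y)$ I perform a chain of three comparison steps. First, introduce $w\in u+W_{0}^{1,p}(B_{4r}(y))$ solving $\operatorname{div}A(x,u,\nabla w)=0$; the standard energy estimate combined with the monotonicity in \eqref{growth} controls $\fint_{B_{4r}(y)}|\nabla u-\nabla w|^{p}$ by $C\fint_{B_{4r}(y)}|F|^{p}$. Second, freeze the $z$-variable: let $v\in w+W_{0}^{1,p}(B_{2r}(y))$ solve $\operatorname{div}A(x,(u)_{B_{2r}(y)},\nabla v)=0$; here condition \eqref{c-condition} and the boundedness $\|u\|_{L^{\infty}}\le M$ yield $\fint_{B_{2r}(y)}|\nabla w-\nabla v|^{p}\le C\,\omega_{M}(\mathrm{osc}_{B_{2r}(y)}u)^{p/(p-1)}\fint_{B_{4r}(y)}|\nabla u|^{p}$. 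Third, replace the $x$-dependence by its average: let $h\in v+W_{0}^{1,p}(B_{r}(y))$ solve $\operatorname{div}\bar{A}_{B_{r}(y)}((u)_{B_{2r}(y)},\nabla h)=0$, so that \eqref{small BMO} makes the excess small via the function $\theta(A,B_{r}(y))$.

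The comparison function $h$ now satisfies an equation that is both $x$- and $z$-free, so classical $C^{1,\alpha}$ (or at least $L^{\infty}$-gradient) theory for the $p$-Laplacian-type operator applies and gives $\|\nabla h\|_{L^{\infty}(B_{r/2}(y))}^{p}\le C\fint_{B_{r}(y)}|\nabla h|^{p}$. Chaining the three comparisons gives, for every $\varepsilon>0$, an estimate of the form
\begin{equation*}
\fint_{B_{r}(y)}|\nabla u-\nabla h|^{p}\,\mathrm{d}x \le \bigl(\varepsilon+C\omega_{M}(\eta)^{p/(p-1)}+C\delta^{\sigma_{0}}\bigr)\fint_{B_{8r}(y)}|\nabla u|^{p}\,\mathrm{d}x+C_{\varepsilon}\fint_{B_{8r}(y)}|F|^{p}\,\mathrm{d}x,
\end{equation*}
for some $\sigma_{0}>0$, once one knows the oscillation of $u$ on $B_{2r}(y)$ is bounded by a small $\eta$. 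The fact that $u\in C^{0,\alpha}_{\mathrm{loc}}$ by De Giorgi--Nash--Moser (or simply pointwise $L^{\infty}$ control with a qualitative modulus on small scales) lets me choose $r$ small enough to force $\omega_{M}(\eta)$ below a given threshold. This is the part I expect to cause the most technical difficulty: making the $z$-continuity quantitative in a way that is compatible with the scaling of the good-$\lambda$ inequality, because $\omega_M$ is only a modulus and must be swallowed uniformly across scales.

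Once the excess decay is in hand, the standard argument from Section \ref{section3} applies verbatim: a Calder\'on--Zygmund--Vitali covering converts the comparison estimate into the good-$\lambda$ inequality
\begin{equation*}
\omega\bigl(\{\mathcal{M}(|\nabla u|^{p})>N\lambda\}\cap B_R\bigr)\le \varepsilon_{1}\,\omega\bigl(\{\mathcal{M}(|\nabla u|^{p})>\lambda\}\cap B_R\bigr)+\omega\bigl(\{\mathcal{M}(|F|^{p})>\nu\lambda\}\cap B_R\bigr),
\end{equation*}
valid for $\lambda\ge \lambda_{0}\fint_{B_{2R}}|\nabla u|^{p}$, the threshold accounting for the inhomogeneous term that replaces the boundary zero condition. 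Integrating this against $\lambda^{q-1}$ with respect to the Lorentz-measure $\alpha^{pq}\omega(\cdot)$ in the usual way (using $[\omega]_{q}\le\gamma$ and the weighted strong $(1,1)$ and $(p,p)$ bounds of $\mathcal{M}$), and then choosing $\varepsilon_{1}$ small relative to $A_q$-constants, one obtains
\begin{equation*}
\|\mathcal{M}(|\nabla u|^{p})\|_{L^{q,t/p}_{\omega}(B_{R})}\le C\|\mathcal{M}(|F|^{p})\|_{L^{q,t/p}_{\omega}(B_{2R})}+C\lambda_{0}\omega(B_{2R})^{p/pq}\fint_{B_{2R}}|\nabla u|^{p},
\end{equation*}
which, after the pointwise bound $|\nabla u|^{p}\le \mathcal{M}(|\nabla u|^{p})$ and the weighted boundedness of $\mathcal{M}$ on $L^{q,t/p}_{\omega}$, gives the stated estimate.
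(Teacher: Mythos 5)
Your plan follows the paper's strategy for Theorem \ref{1-1} quite closely; in fact the paper gives no proof of the interior statement at all, remarking only that it is ``similar to the global case,'' so the right benchmark is the proof of Theorem \ref{1-1}. The backbone you describe --- a comparison estimate on small balls, a weighted good-$\lambda$ inequality obtained through a Vitali/Calder\'on--Zygmund covering together with the strong doubling property of $A_q$ weights (Lemma \ref{2-3}), and a level-set summation via Lemma \ref{2-1} and Lemma \ref{2-2} --- is exactly what the paper does, and your handling of the inhomogeneous extra term $\omega(B_{2R})^{1/pq}(\fint_{B_{2R}}|\nabla u|^p)^{1/p}$ by introducing a threshold $\lambda_0\sim\fint_{B_{2R}}|\nabla u|^p$ corresponds to the paper's choice of the scaling parameter $\mu=\tilde C\|\nabla u\|_{L^p}$, which for the global case is then absorbed into $\|F\|$ via Lemma \ref{5-1} but in the interior case must remain on the right. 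The genuine point of divergence is the comparison step: you propose to re-derive it from scratch through a three-stage chain (drop $F$, freeze $z$, freeze $x$) plus De Giorgi--Nash--Moser H\"older continuity of $u$ to control $\mathrm{osc}_{B_{2r}}u$, whereas the paper simply invokes the one-step approximation Lemma \ref{comparison1} from \cite{byun2018global}, which is proved there by a compactness argument that sidesteps the quantitative oscillation issue. Your instinct that the $z$-continuity step is the delicate point is well founded: in the cited lemma the smallness of the rescaled gradient $\fint_{B_\sigma}|\nabla\hat u|^p\le1$ with $\sigma$ large is what makes $\mathrm{osc}\,\hat u$ small (via Poincar\'e), uniformly in scale, without having to carry a De Giorgi modulus whose constant depends on the local energy; if you keep the direct route you will need to be careful that the H\"older seminorm you invoke is rescaled consistently across the Calder\'on--Zygmund stopping-time balls. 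Absent that care, your argument is essentially a correct unpacking of the black box the paper cites.
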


In order to state the other main result, which is actually a consequence of Theorem\ref{1-1-1}, we recall the Besov space $B_{p,q}^{\alpha}(\mathbb{R}^{n})$.

\begin{definition}
Let $h\in\mathbb{R}^{n}$, $f:\mathbb{R}^{n}\rightarrow\mathbb{R}$. Let $0<\alpha<1$ and $1\leq p,q<\infty$. The Besov space consists of all functions $f\in L^{p}(\mathbb{R}^{n})$ for which the norm
\begin{equation*}
  \|f\|_{B_{p,q}^{\alpha}(\mathbb{R}^{n})}=\|f\|_{L^{p}(\mathbb{R}^{n})}+[f]_{\dot{B}_{p,q}^{\alpha}(\mathbb{R}^{n})}
\end{equation*}
is finite. Where
\begin{equation*}
  [f]_{\dot{B}_{p,q}^{\alpha}(\mathbb{R}^{n})}=\left(\int_{\mathbb{R}^{n}}\left(\int_{\mathbb{R}^{n}}\frac{|f(x+h)-f(x)|^{p}}{|h|^{\alpha p}}\operatorname{d}\!x\right)^{\frac{q}{p}}\frac{\operatorname{d}\!h}{|h|^{n}}\right)^{\frac{1}{q}}.
\end{equation*}
When $q=\infty$, we say that $f\in B_{p,\infty}^{\alpha}$, if
\begin{equation*}
  \|f\|_{B_{p,\infty}^{\alpha}(\mathbb{R}^{n})}=\|f\|_{L^{p}(\mathbb{R}^{n})}+[f]_{\dot{B}_{p,\infty}^{\alpha}(\mathbb{R}^{n})}
\end{equation*}
is finite. Where
\begin{equation*}
  [f]_{\dot{B}_{p,\infty}^{\alpha}(\mathbb{R}^{n})}=\sup_{h\in\mathbb{R}^{n}}\left(\int_{\mathbb{R}^{n}}\frac{|f(x+h)-f(x)|^{p}}{|h|^{\alpha p}}\operatorname{d}\!x\right)^{\frac{1}{p}}.
\end{equation*}
\end{definition}

\begin{remark}
As matter of fact, one can simply integrates for $h\in B_{\delta}$ for a fixed $\delta>0$ when $q<\infty$ and take the supremum over $|h|\leq\delta$ to obtain an equivalent norm.
\end{remark}

\begin{theorem}\label{1-2}
Let $0<\alpha<1$, Assume that $A(x,z,\xi)$ satisfies \eqref{c-condition} and \eqref{growth} for $p=2$, take $\omega_{M}(t)=t^{\alpha}$. Moreover, we suppose that there exists $g\in L_{loc}^{\frac{n}{\alpha}}(\Omega)$ such that
\begin{equation}\label{3.4}
  |A(x,z,\xi)-A(y,z,\xi)|\leq|x-y|^{\alpha}(g(x)+g(y))|\xi|
\end{equation}
for a.e.$x\in\Omega$, $\forall(z,\xi)\in\mathbb{R}\times\mathbb{R}^{n}$. If $u\in W_{loc}^{1,2}(\Omega)\cap L^{\infty}(\Omega)$ is a weak solution of
\begin{equation}\label{specialmodel}
   \operatorname{div}A(x,u,\nabla u)=0   \quad\quad\mbox{in}\ \ \Omega\,,
\end{equation}
then, $\nabla u\in B_{2,\infty}^{\alpha}$, locally.
\end{theorem}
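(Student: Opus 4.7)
The plan is to combine a translation/difference-quotient argument with a higher-integrability bootstrap coming from Theorem \ref{1-1-1}. Fix a relatively compact subdomain $K \Subset \Omega$. First, I would verify that the H\"older-type condition \eqref{3.4}, together with $g \in L^{n/\alpha}_{loc}$, forces the small-BMO assumption \eqref{small BMO} to hold at any sufficiently small scale: a direct estimate of the mean oscillation gives
\begin{equation*}
\fint_{B_{\rho}(y)} \theta\bigl(A, B_{\rho}(y)\bigr)(x,z)\, dx \leq C\rho^{\alpha}\, \fint_{B_{\rho}(y)} g(x)\,dx \leq C\,\|g\|_{L^{n/\alpha}(B_{\rho}(y))},
\end{equation*}
which tends to $0$ as $\rho\to 0$ by absolute continuity of the integral. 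Theorem \ref{1-1-1} applied with $F \equiv 0$, trivial weight, and $q$ arbitrarily large therefore promotes $\nabla u$ to lie in $L^{s}_{loc}(\Omega)$ for every finite $s$; I would fix $s_{0} = \max\{n,\, 2n/(n-2\alpha)\}+1$.

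For $h \in \mathbb{R}^{n}$ with $|h|$ small, set $u_{h}(x) = u(x+h)$ and $v = u_{h} - u$; then $u_{h}$ solves $\operatorname{div} A(x+h, u_{h}, \nabla u_{h}) = 0$. Subtracting the two equations and inserting $\pm A(x, u, \nabla u_{h})$ yields
\begin{equation*}
\operatorname{div}\bigl[A(x, u, \nabla u_{h}) - A(x, u, \nabla u)\bigr] = \operatorname{div}(G_{h} - H_{h}),
\end{equation*}
where $G_{h}(x) = A(x, u_{h}, \nabla u_{h}) - A(x+h, u_{h}, \nabla u_{h})$ and $H_{h}(x) = A(x, u_{h}, \nabla u_{h}) - A(x, u, \nabla u_{h})$. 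By \eqref{c-condition} and \eqref{3.4},
\begin{equation*}
|H_{h}| \leq |u_{h}-u|^{\alpha}|\nabla u_{h}|, \qquad |G_{h}| \leq |h|^{\alpha}\bigl(g(x)+g(x+h)\bigr)|\nabla u_{h}|.
\end{equation*}
Since $p=2$, the left-hand side linearizes cleanly to $\operatorname{div}(\mathbf{B}(x)\nabla v)$ with $\mathbf{B}(x) = \int_{0}^{1}\partial_{\xi}A(x, u, \nabla u + s\nabla v)\,ds$ bounded and uniformly elliptic in view of \eqref{growth}.

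The core of the argument is to show $\|G_{h}\|_{L^{2}(K)} + \|H_{h}\|_{L^{2}(K)} = O(|h|^{\alpha})$. For $G_{h}$, H\"older's inequality with exponents $n/(2\alpha)$ and $n/(n-2\alpha)$ gives
\begin{equation*}
\|G_{h}\|_{L^{2}(K)} \leq C|h|^{\alpha}\,\|g\|_{L^{n/\alpha}(K^{*})}\,\|\nabla u\|_{L^{2n/(n-2\alpha)}(K^{*})},
\end{equation*}
and both factors are finite by hypothesis and the first step. For $H_{h}$ I would use the classical pointwise bound $|u(x+h) - u(x)| \leq C|h|\bigl(M|\nabla u|(x) + M|\nabla u|(x+h)\bigr)$ to obtain $|u_{h}-u|^{\alpha} \leq C|h|^{\alpha}(M|\nabla u|)^{\alpha}$, and then apply H\"older together with the $L^{n}$-boundedness of the Hardy--Littlewood maximal operator to get
\begin{equation*}
\|H_{h}\|_{L^{2}(K)} \leq C|h|^{\alpha}\,\|\nabla u\|_{L^{n}(K^{*})}^{\alpha}\,\|\nabla u\|_{L^{2n/(n-2\alpha)}(K^{*})}.
\end{equation*}

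Finally, testing the linearized equation $\operatorname{div}(\mathbf{B}\nabla v) = \operatorname{div}(G_{h}-H_{h})$ against $v\eta^{2}$ for a smooth cutoff $\eta$ between $K$ and $K^{*}$ yields the standard Caccioppoli bound
\begin{equation*}
\int_{K}|\nabla v|^{2}\,dx \leq C\int_{K^{*}}|G_{h}-H_{h}|^{2}\,dx + C\int_{K^{*}}|v|^{2}\,dx,
\end{equation*}
in which the last term is $O(|h|^{2}) = O(|h|^{2\alpha})$ for $|h|<1$ by the elementary translation estimate $\|v\|_{L^{2}} \leq C|h|\|\nabla u\|_{L^{2}}$. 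Combining all the pieces produces $\|\nabla u(\cdot+h) - \nabla u\|_{L^{2}(K)} \leq C|h|^{\alpha}$ uniformly in small $h$, which is exactly the $B^{\alpha}_{2,\infty}$ seminorm bound of $\nabla u$ on $K$. The principal difficulty I anticipate is the balancing act between the H\"older decay factor $|h|^{\alpha}$ and the factor $|\nabla u_{h}|$ that must be absorbed via H\"older against both $g$ and $M|\nabla u|$: making the exponents match requires $\nabla u$ to lie in $L^{n}$ and $L^{2n/(n-2\alpha)}$ locally, which is exactly what the preliminary use of Theorem \ref{1-1-1} provides.
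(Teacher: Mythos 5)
Your proposal is correct and reaches the same conclusion as the paper, but it organizes the argument differently. Both proofs share the same skeleton: (i) verify that the H\"older assumption \eqref{3.4} implies the small-BMO condition at small scales, so that Theorem \ref{1-1-1} with $F\equiv 0$, $p=2$, trivial weight bootstraps $\nabla u$ into $L^{s}_{loc}$ for every finite $s$; and (ii) run a difference-quotient argument with a cutoff to estimate $\|\Delta_{h}\nabla u\|_{L^{2}}$. Where you diverge is in how (ii) is carried out. The paper tests \eqref{specialmodel} directly with $\varphi=\Delta_{-h}(\eta^{2}\Delta_{h}u)$, uses the discrete integration-by-parts to move the difference onto $A$, decomposes the resulting integrand into five terms $I_{1},\dots,I_{5}$, and bounds each one with the monotonicity/Lipschitz forms \eqref{3.1}--\eqref{3.3} and \eqref{3.4} plus Young/H\"older. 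You instead recast the subtracted equations as a linear divergence-form equation $\operatorname{div}(\mathbf{B}\nabla v)=\operatorname{div}(G_{h}-H_{h})$ with uniformly elliptic $\mathbf{B}$ (this is precisely where $p=2$ is used), and then apply a standard Caccioppoli estimate. The two are essentially equivalent, since the Caccioppoli inequality applied to the linearized equation produces the same cancellations the paper achieves by direct testing; the monotonicity \eqref{3.1} plays the role of the ellipticity of $\mathbf{B}$, so one can actually bypass the integral representation of $\mathbf{B}$ entirely and avoid fussing over the set where $\partial_{\xi}A$ is not defined. A second, smaller difference is how the $z$-oscillation term is controlled: you invoke the Haj\l{}asz-type pointwise bound $|\Delta_{h}u|\lesssim|h|\bigl(\mathcal{M}|\nabla u|(x)+\mathcal{M}|\nabla u|(x+h)\bigr)$ followed by H\"older with exponents $n/(2\alpha)$ and $n/(n-2\alpha)$ and the $L^{n}$-boundedness of the maximal operator, whereas the paper uses the cleaner integrated translation estimate $\|\Delta_{h}u\|_{L^{2}}\lesssim|h|\|\nabla u\|_{L^{2}}$ (Lemma \ref{3-3}) together with H\"older in the form $\int|\Delta_{h}u|^{2\alpha}|\nabla u|^{2}\leq(\int|\Delta_{h}u|^{2})^{\alpha}(\int|\nabla u|^{2/(1-\alpha)})^{1-\alpha}$. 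Your variant requires the maximal operator to be localized to a slightly larger ball (so that it remains finite), but that is routine; the paper's route avoids the maximal function altogether at the cost of needing $\nabla u\in L^{2/(1-\alpha)}_{loc}$ rather than $L^{n}_{loc}$, and since the Calder\'on--Zygmund step gives all finite exponents, both requirements are met. In short, your argument is a valid and slightly more structured alternative, trading a five-term expansion for a linearization plus Caccioppoli, with no essential gap.
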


\section{Preliminaries.}\label{section2}

\subsection{Invariance.}\label{section2.1}
We note that our equation is scaling invariant. Indeed, if $A(x,u,\nabla u)$ satisfies the conditions \eqref{c-condition}, \eqref{growth} and \eqref{small BMO}, then for some fixed $\mu,r >0$, $x_{0}\in\mathbb{R}$, the rescaled nonlinearity
\begin{equation*}
  \hat{A}(x,z,\xi)=\frac{A(rx+x_{0},\mu rz, \mu\xi)}{\mu^{p-1}}
\end{equation*}
satisfies \eqref{growth}. Moreover, $\hat{A}(x,z,\xi)$ satisfies
\begin{equation}\label{small BMO2}
  \sup_{-\frac{M}{\mu r}\leq z\leq \frac{M}{\mu r}}\sup_{0<\rho\leq \frac{R}{r}}\sup_{y\in \mathbb{R}^{n}}\fint_{B_{\rho}(y)}\theta\left(A,B_{\rho}(y)\right)(x)\operatorname{d}\!x\leq\delta
\end{equation}
and
\begin{equation}
  |\hat{A}(x,z_{1},\xi)-\hat{A}(x,z_{2},\xi)|\leq\omega_{M}(\mu r|z_{1}-z_{2}|)|\xi|^{p-1}
\end{equation}
for a.e. $x\in\widehat{\Omega}$, $\forall z_{1},z_{2}\in\left[-\frac{M}{\mu r},\frac{M}{\mu r}\right]$. Where $\widehat{\Omega}=\left\{\frac{x-x_{0}}{r}, x\in\Omega\right\}$ is $\left(\delta,\frac{R}{r}\right)$-Reifenberg flat.

The properties mentioned above are obvious owing to some elementary calculation. Let us now consider the invariance of equation \eqref{model} with respect to scaling. Assume that $u\in W_{0}^{1,p}(\Omega)\cap L^{\infty}(\Omega)$ is a weak solution of \eqref{model}, then $\hat{u}=u(rx+x_{0})/\mu\in W_{0}^{1,p}(\widehat{\Omega})\cap L^{\infty}(\widehat{\Omega})$ satisfying $\|\hat{u}\|_{L^{\infty}(\widehat{\Omega})}\leq\frac{M}{\mu r}$ solve the equation
\begin{equation}\label{model11}
  \left\{\begin{array}{r@{\  }c@{\  }ll}
  \operatorname{div}\hat{A}(x,\hat{u},\nabla\hat{u})&=&\operatorname{div}\left(|\hat{F}|^{p-2}\hat{F}\right)  & \mbox{in}\ \ \Omega\,, \\[0.05cm]
 \hat{u}&=&0  & \mbox{on}\ \ \partial \Omega. \\[0.05cm]
\end{array}\right.
\end{equation}
where $\hat{F}(x)=\frac{F(rx+x_{0})}{\mu}$.

\subsection{Muckenhoupt weights and weighted inequalities.}

We will use the strong doubling property of $A_{q}$ weight stated below. Hereafter we denote by $\omega(\Omega)$ the integral $\int_{\Omega}\omega(x)\operatorname{d}\!x$

\begin{lemma}\label{strong doubling}{\rm (cf.\cite{coifman1974weighted})}.
For  $1<q<\infty$, the following statements hold true

\begin{enumerate}[(1)]
\item if $\omega\in A_{q}$, then for every ball $B\subset\mathbb{R}^{n}$ and every measurable set $E\subset B$,

\begin{equation*}
  \omega(B)\leq [\omega]_{q}\left(\frac{|B|}{|E|}\right)^{q}\omega(E)
\end{equation*}
\item if $\omega\in A_{q}$ with $[\omega]_{q}\leq \gamma$ for some given $\gamma \geq 1$, then there is $C=C(\gamma,n)$ and $\alpha=\alpha(\gamma, n)>0$ such that
    \begin{equation*}
      \omega(E)\leq C\left(\frac{|E|}{|B|}\right)^{\alpha}\omega(B)
    \end{equation*}
    for every ball $B\subset\mathbb{R}^{n}$ and every measurable set $E\subset B$.
\end{enumerate}
\end{lemma}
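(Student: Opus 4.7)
The plan is to treat the two items separately: part (1) follows directly from H\"older's inequality and the definition of the $A_q$ constant, while part (2) requires the reverse H\"older self-improvement property of Muckenhoupt weights, which is the main nontrivial ingredient. No PDE structure enters; everything is purely real-variable and the constants depend only on $\gamma$ and $n$.

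For part (1), I start from the identity $|E|=\int_{E}\omega^{1/q}\omega^{-1/q}\,dx$ and apply H\"older's inequality with conjugate exponents $q$ and $q/(q-1)$ to obtain
\begin{equation*}
|E|^{q}\leq\omega(E)\left(\int_{E}\omega^{1/(1-q)}\,dx\right)^{q-1}.
\end{equation*}
Enlarging the second integral from $E$ to the ambient ball $B$ and invoking the $A_q$ bound $\bigl(\fint_{B}\omega\,dx\bigr)\bigl(\fint_{B}\omega^{1/(1-q)}\,dx\bigr)^{q-1}\leq[\omega]_q$, I may replace $\bigl(\int_{B}\omega^{1/(1-q)}\,dx\bigr)^{q-1}$ by $[\omega]_q|B|^{q}/\omega(B)$. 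Rearranging the resulting inequality gives exactly $\omega(B)\leq[\omega]_q(|B|/|E|)^{q}\omega(E)$.

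For part (2), the key input is the reverse H\"older inequality for $A_q$ weights: there exist $\varepsilon=\varepsilon(\gamma,n)>0$ and $C_1=C_1(\gamma,n)$ such that $\bigl(\fint_{B}\omega^{1+\varepsilon}\,dx\bigr)^{1/(1+\varepsilon)}\leq C_1\fint_{B}\omega\,dx$ holds on every ball $B$. Granted this, a single application of H\"older's inequality with exponents $1+\varepsilon$ and $(1+\varepsilon)/\varepsilon$ yields
\begin{equation*}
\omega(E)\leq\Bigl(\int_{B}\omega^{1+\varepsilon}\,dx\Bigr)^{\frac{1}{1+\varepsilon}}|E|^{\frac{\varepsilon}{1+\varepsilon}}\leq C_1\,\omega(B)\left(\frac{|E|}{|B|}\right)^{\varepsilon/(1+\varepsilon)},
\end{equation*}
so the desired exponent is simply $\alpha=\varepsilon/(1+\varepsilon)$, and the constant depends only on $\gamma$ and $n$ through $C_1$ and $\varepsilon$.

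The main obstacle is the reverse H\"older step itself, which is not elementary. The standard derivation (as in Coifman--Fefferman \cite{coifman1974weighted}) uses a Calder\'on--Zygmund stopping-time decomposition on the level sets of $\omega$ relative to the fixed ball $B$, producing a good-$\lambda$ type decay estimate that forces $\omega$ into a slightly better Lebesgue class than $L^{1}$ on $B$, with a quantitative gain $\varepsilon>0$ depending only on $\gamma$ and $n$. Since the present lemma is cited as a black-box result from \cite{coifman1974weighted}, I would simply invoke the reverse H\"older inequality rather than reprove it, and combine it with H\"older as above.
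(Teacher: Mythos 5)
Your proposal is correct: part (1) is the standard H\"older computation against the $A_q$ condition, and part (2) follows from the reverse H\"older inequality exactly as you describe. The paper does not prove this lemma at all --- it cites it as a known result from Coifman--Fefferman --- and your argument is precisely the standard proof from that reference, so there is nothing to compare beyond noting that your derivation is sound and the constants can indeed be made to depend only on $\gamma$ and $n$ (by passing through the $A_\infty$ bound $[\omega]_{A_\infty}\leq[\omega]_{q}\leq\gamma$ in the reverse H\"older step).
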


\begin{lemma}\label{jie}{\rm (cf.\cite{grafakos2008classical})}.
Let $\omega$ be an $A_{q}$ weight for some $1<q<\infty$. Then there exists $\sigma=\sigma(n,q,[\omega]_{q})>0$ such that $q-\sigma>1$ and $\omega\in A_{q-\sigma}$ with $[\omega]_{q-\sigma}\leq C(n,q,[\omega]_{q})$.
\end{lemma}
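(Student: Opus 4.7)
The plan is to deduce this self-improvement property from the classical reverse Hölder inequality enjoyed by every Muckenhoupt weight. Recall the duality $\omega \in A_{q} \Longleftrightarrow \omega^{1-q'} \in A_{q'}$, where $q' = q/(q-1)$, with the two weight constants quantitatively controlled by each other through $q$ and $[\omega]_{q}$. The reverse Hölder inequality, applied to the dual weight $v := \omega^{1-q'} \in A_{q'}$, provides $\varepsilon = \varepsilon(n,q,[\omega]_{q}) > 0$ and $C = C(n,q,[\omega]_{q})$ such that
\begin{equation*}
  \left(\fint_{B}\omega^{(1-q')(1+\varepsilon)}\operatorname{d}\!x\right)^{1/(1+\varepsilon)} \le C \fint_{B}\omega^{1-q'}\operatorname{d}\!x
\end{equation*}
for every ball $B \subset \mathbb{R}^{n}$.

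The heart of the argument is to choose $\sigma > 0$ so that the enhanced exponent $(1-q')(1+\varepsilon)$ matches exactly the exponent $1/(1-(q-\sigma))$ required in the definition of $[\omega]_{q-\sigma}$. A short calculation shows that the correct choice is $\sigma := \varepsilon(q-1)/(1+\varepsilon)$, which gives $q - \sigma = (q+\varepsilon)/(1+\varepsilon)$, so that $1/(1-(q-\sigma)) = (1+\varepsilon)/(1-q) = (1-q')(1+\varepsilon)$ and $(q-\sigma)-1 = (q-1)/(1+\varepsilon)$. In particular $\sigma < q - 1$, so automatically $q - \sigma > 1$.

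With this choice of $\sigma$, raising the reverse Hölder inequality to the $(q-1)$-th power transforms the left-hand side into $\left(\fint_{B}\omega^{1/(1-(q-\sigma))}\operatorname{d}\!x\right)^{(q-\sigma)-1}$ and the right-hand side into $C^{q-1}\left(\fint_{B}\omega^{1/(1-q)}\operatorname{d}\!x\right)^{q-1}$. Multiplying through by $\fint_{B}\omega\operatorname{d}\!x$ and invoking $\omega \in A_{q}$ on the right yields, uniformly in $B$,
\begin{equation*}
  \left(\fint_{B}\omega\operatorname{d}\!x\right)\left(\fint_{B}\omega^{1/(1-(q-\sigma))}\operatorname{d}\!x\right)^{(q-\sigma)-1} \le C^{q-1}[\omega]_{q},
\end{equation*}
which is exactly $[\omega]_{q-\sigma} \le C(n,q,[\omega]_{q})$.

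The main obstacle is the reverse Hölder inequality itself; its proof is a nontrivial Calderón–Zygmund stopping-time argument on the level sets of the weight, leveraging the strong doubling property recalled in Lemma \ref{strong doubling}. I would not reprove it here, as it is a classical result fully developed in \cite{grafakos2008classical}; once taken as a black box, the remainder of the argument reduces to the algebraic book-keeping of exponents sketched above.
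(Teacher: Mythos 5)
Your proposal is correct: the paper offers no proof of this lemma (it is quoted from the cited reference), and your argument---dualizing to $\omega^{1-q'}\in A_{q'}$, applying the reverse H\"older inequality to the dual weight, and choosing $\sigma=\varepsilon(q-1)/(1+\varepsilon)$ so that $(1-q')(1+\varepsilon)=1/(1-(q-\sigma))$---is exactly the standard argument behind that citation. The exponent bookkeeping checks out, including $q-\sigma=(q+\varepsilon)/(1+\varepsilon)>1$ and the final bound $[\omega]_{q-\sigma}\leq C^{q-1}[\omega]_{q}$, and all constants depend only on $n$, $q$, $[\omega]_{q}$ as required.
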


secondly, we state the following result which comes from standard measure theory.
\begin{lemma}\label{2-1}
Assume that $g\geq 0$ is a measurable function in a bounded subset $U\subset \mathbb{R}^{n}$. Let $\theta>0$, $\Gamma>1$ be constants, and let $\omega$ be a weight in $\mathbb{R}^{n}$. Then for $0<q,t<\infty$, we have
\begin{equation*}
  g\in L_{\omega}^{q,t}(U)\Leftrightarrow S:= \sum_{k\geq1}\Gamma^{tk}\omega \left(\{x\in U : g(x)>\theta \Gamma^{k}\}\right)^{\frac{t}{q}}<+\infty
\end{equation*}
and moreover, there exist a constant $C>0$ depending only on $\theta, \Gamma, t$, such that
\begin{equation*}
  C^{-1}S\leq \|g\|_{L_{\omega}^{q,t}(U)}^{t}\leq C\left(\omega(U)^{\frac{t}{q}}+S\right)
\end{equation*}
Analogously, for $0<q<\infty$ and $t=\infty$ we have
\begin{equation*}
   C^{-1}T\leq \|g\|_{L_{\omega}^{q,\infty}(U)}\leq C\left(\omega(U)^{\frac{1}{q}}+T\right)
\end{equation*}
Where $T$ is the quantity
\begin{equation*}
  T:=\sup_{k\geq1}\Gamma^{k}\omega \left(\{x\in U : g(x)>\theta \Gamma^{k}\}\right)^{\frac{1}{q}}
\end{equation*}
\end{lemma}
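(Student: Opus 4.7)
The plan is to discretize the defining integral of the Lorentz quasinorm via the change of variables $\alpha=\theta\Gamma^{s}$ and exploit the monotonicity of the distribution function $\alpha\mapsto\omega(\{x\in U:g(x)>\alpha\})$ to sandwich the transformed integral between geometric sums of the type $S$. The threshold $s=0$, i.e.\ $\alpha=\theta$, naturally separates the low-level regime, where the distribution function is trivially bounded by $\omega(U)$ and produces the $\omega(U)^{t/q}$ term, from the high-level regime, which produces $S$.

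Concretely, for $0<q,t<\infty$, the substitution $\alpha=\theta\Gamma^{s}$ (with $d\alpha/\alpha=(\ln\Gamma)\,ds$) rewrites the definition as
$$\|g\|_{L_{\omega}^{q,t}(U)}^{t}=q(\ln\Gamma)\,\theta^{t}\int_{-\infty}^{\infty}\Gamma^{ts}\,\omega\bigl(\{x\in U:g(x)>\theta\Gamma^{s}\}\bigr)^{t/q}\,ds.$$
Since the distribution function is non-increasing, on each unit interval $[k,k+1)$, $k\in\mathbb{Z}$, one has the sandwich
$$\kappa\,\Gamma^{tk}\,\omega(\{g>\theta\Gamma^{k+1}\})^{t/q}\;\leq\;\int_{k}^{k+1}\Gamma^{ts}\omega(\{g>\theta\Gamma^{s}\})^{t/q}\,ds\;\leq\;\kappa\,\Gamma^{tk}\,\omega(\{g>\theta\Gamma^{k}\})^{t/q},$$
where $\kappa=(\Gamma^{t}-1)/(t\ln\Gamma)$. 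Summing the right-hand side over all $k\in\mathbb{Z}$, the tail $k\geq 1$ equals $S$ by definition, while for $k\leq 0$ the trivial bound $\omega(\{g>\theta\Gamma^{k}\})\leq\omega(U)$ combined with the geometric series $\sum_{k\leq 0}\Gamma^{tk}=(1-\Gamma^{-t})^{-1}$ contributes a term of order $\omega(U)^{t/q}$. This yields the upper estimate $\|g\|_{L_{\omega}^{q,t}(U)}^{t}\leq C(\omega(U)^{t/q}+S)$.

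For the lower bound I would retain only the intervals $[k-1,k]$ with $k\geq 1$ and apply the left-hand sandwich to obtain
$$\|g\|_{L_{\omega}^{q,t}(U)}^{t}\;\geq\;q(\ln\Gamma)\,\theta^{t}\,\kappa\sum_{k\geq 1}\Gamma^{t(k-1)}\omega(\{g>\theta\Gamma^{k}\})^{t/q}\;=\;C^{-1}S.$$
The case $t=\infty$ is completely parallel: the substitution transforms the Marcinkiewicz quasinorm into $\theta\sup_{s\in\mathbb{R}}\Gamma^{s}\omega(\{g>\theta\Gamma^{s}\})^{1/q}$, and monotonicity shows this supremum is comparable up to a factor $\Gamma$ to the discrete supremum $\sup_{k\in\mathbb{Z}}\Gamma^{k}\omega(\{g>\theta\Gamma^{k}\})^{1/q}$; separating $k\leq 0$ (controlled by $\Gamma\omega(U)^{1/q}$) from $k\geq 1$ (which is $T$ up to a factor of $\Gamma$) gives the two-sided bound. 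The argument is essentially a bookkeeping of geometric series, so no step is genuinely difficult; the one point to monitor is that every multiplicative constant appearing above, namely $q(\ln\Gamma)\theta^{t}$, $\kappa$, and $(1-\Gamma^{-t})^{-1}$, depends only on the stated parameters $\theta,\Gamma,t$.
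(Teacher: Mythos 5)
Your argument is correct and is the standard dyadic-discretization proof: the paper itself offers no proof of Lemma \ref{2-1}, merely citing it as standard measure theory, and your substitution $\alpha=\theta\Gamma^{s}$ together with the monotonicity of the distribution function is exactly the argument one finds in the sources the paper leans on (e.g.\ Mengesha--Phuc). The only cosmetic remark is that the prefactor $q\,(\ln\Gamma)\,\theta^{t}$ coming from the definition of $\|\cdot\|_{L_{\omega}^{q,t}}$ makes your constant depend (harmlessly) on $q$ as well, a dependence already implicit in the lemma as stated; everything else, including the $t=\infty$ case, checks out.
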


The following is a summary of embedding theorems that will be used later, see \cite{grafakos2008classical}.
\begin{lemma}\label{embedding}
Let $\Omega$ be a bounded measurable subset of $\mathbb{R}^{n}$ and $\omega$ be an $A_{q}$ weight for $1<q<\infty$.
\begin{enumerate}[(1)]
\item If $0<t\leq p_{1}<p_{2}\leq\infty$, then $L_{\omega}^{p_{2},\infty}(\Omega)\subset L_{\omega}^{p_{1},t}(\Omega)$. Moreover
      $$\|g\|_{L_{\omega}^{p_{1},t}(\Omega)}\leq C(p_{1},p_{2},t)\omega(\Omega)^{\frac{1}{p_{1}}-\frac{1}{p_{2}}}\|g\|_{L_{\omega}^{p_{2},\infty}(\Omega)}$$
\item If $0<t\leq\infty$, $0<q<\infty$, then $L_{\omega}^{q,t}(\Omega)\subset L_{\omega}^{q,\infty}(\Omega)$.
\end{enumerate}
\end{lemma}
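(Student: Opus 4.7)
The plan is to prove both assertions through a layer-cake representation of the quasinorm. Writing $\lambda(\alpha) := \omega(\{x\in\Omega : |g(x)|>\alpha\})$ for the weighted distribution function, which is non-increasing in $\alpha$ and bounded by $\omega(\Omega)$, a direct unwinding of the definition of $\|\cdot\|_{L_\omega^{q,t}(\Omega)}$ gives
\begin{equation*}
  \|g\|_{L_\omega^{q,t}(\Omega)}^{t} \;=\; q\int_{0}^{\infty}\alpha^{t-1}\,\lambda(\alpha)^{t/q}\,d\alpha.
\end{equation*}
Both items reduce to one-dimensional integral manipulations of this formula, combined with the weak-type control $\alpha^{p_{2}}\lambda(\alpha)\leq\|g\|_{L_\omega^{p_{2},\infty}(\Omega)}^{p_{2}}$ available in (1).

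For part (1) with $p_{2}<\infty$, the key step is to split the $\alpha$-integral at the natural threshold $\alpha_{0}:=\|g\|_{L_\omega^{p_{2},\infty}(\Omega)}\,\omega(\Omega)^{-1/p_{2}}$, at which the two bounds $\lambda(\alpha)\leq\omega(\Omega)$ and $\lambda(\alpha)\leq\|g\|_{L_\omega^{p_{2},\infty}}^{p_{2}}\alpha^{-p_{2}}$ coincide. On $(0,\alpha_{0}]$ the first bound produces $\omega(\Omega)^{t/p_{1}}\alpha_{0}^{t}/t$; on $(\alpha_{0},\infty)$ the second bound produces an integral of $\alpha^{t-1-p_{2}t/p_{1}}$ which converges precisely because the hypothesis $p_{1}<p_{2}$ forces the exponent to be strictly less than $-1$. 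Collecting powers, both contributions assemble into $C\,\omega(\Omega)^{t(1/p_{1}-1/p_{2})}\|g\|_{L_\omega^{p_{2},\infty}(\Omega)}^{t}$, and a $t$-th root delivers the asserted inequality. The case $p_{2}=\infty$ is handled directly, since $\lambda(\alpha)=0$ for $\alpha>\|g\|_{L^{\infty}(\Omega)}$: the representation reduces to an integral of $\omega(\Omega)^{t/p_{1}}\alpha^{t-1}$ on $(0,\|g\|_{L^{\infty}(\Omega)})$, yielding the same bound with the convention $1/p_{2}=0$.

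For part (2), the only nontrivial case is $t<\infty$. Fix $\alpha>0$ and restrict the layer-cake integral to $(\alpha/2,\alpha)$: since $\lambda$ is non-increasing one has $\lambda(\beta)\geq\lambda(\alpha)$ there, so
\begin{equation*}
  \|g\|_{L_\omega^{q,t}(\Omega)}^{t} \;\geq\; q\,\lambda(\alpha)^{t/q}\int_{\alpha/2}^{\alpha}\beta^{t-1}\,d\beta \;=\; \tfrac{q}{t}(1-2^{-t})\,\alpha^{t}\lambda(\alpha)^{t/q}.
\end{equation*}
Taking $t$-th roots and then the supremum over $\alpha>0$ yields $\|g\|_{L_\omega^{q,\infty}(\Omega)}\leq C(q,t)\,\|g\|_{L_\omega^{q,t}(\Omega)}$.

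There is no genuine obstacle, only one delicate technical choice: the cutoff $\alpha_{0}$ in part (1). Any other splitting level forces the two contributions to carry mismatched powers of $\omega(\Omega)$ and $\|g\|_{L_\omega^{p_{2},\infty}}$, obscuring the clean final exponent $1/p_{1}-1/p_{2}$; once $\alpha_{0}$ is chosen to balance the two estimates on $\lambda$, the remaining work is a mechanical power-counting. Note also that the Muckenhoupt hypothesis $\omega\in A_{q}$ is not actually used in the argument—only $\omega(\Omega)<\infty$ together with monotonicity of the distribution function is needed.
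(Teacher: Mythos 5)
Your proof is correct. The paper itself gives no argument for this lemma---it is stated as a summary of known embeddings with a citation to Grafakos---and your layer-cake computation (splitting at the balancing threshold $\alpha_{0}=\|g\|_{L_{\omega}^{p_{2},\infty}}\omega(\Omega)^{-1/p_{2}}$ for part (1), and bounding $\alpha^{t}\lambda(\alpha)^{t/q}$ by the tail of the integral for part (2)) is exactly the standard proof of these facts; your closing observation that only $\omega(\Omega)<\infty$, and not the $A_{q}$ condition, is actually used is also accurate.
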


Thirdly, we concern on the connection between the boundedness of the Hardy-Littlewood maximal operator on weighted spaces and the characterization of $A_{q}$ weight, which is crucial in treating our problem. For a given locally integrable function $f\in L_{\operatorname{loc}}^{1}(\mathbb{R}^{n})$, the Hardy-Littlewood maximal function is defined as
\begin{equation*}
  \mathcal{M}f(x)=\sup_{\rho>0}\fint_{B_{\rho}(x)}|f(y)|\operatorname{d}\!y
\end{equation*}
For a function $f$ that is defined only on a bounded domain $U$, we define
\begin{equation*}
  \mathcal{M}_{U}f(x)=\mathcal{M}(f\chi U)(x),
\end{equation*}
Where $\chi U$ is the characteristic function of the set $U$. The following boundedness of Hardy-Littlewood maximal operator $\mathcal{M}: L_{\omega}^{q,t}(\mathbb{R}^{n})\rightarrow L_{\omega}^{q,t}(\mathbb{R}^{n})$ is classical.

\begin{lemma}\label{2-2}{\rm (cf.\cite{mengesha2012global}\cite{muckenhoupt1972weighted})}.
Let $\omega$ be an $A_{q}$ weight for some $1<q<\infty$. For any $0<t\leq\infty$, there exists a constant $C=C(n, q, t, [\omega]_{q})$ such that
\begin{equation}\label{boundedness of M}
  \|\mathcal{M}f\|_{L_{\omega}^{q,t}(\mathbb{R}^{n})}\leq C \|f\|_{L_{\omega}^{q,t}(\mathbb{R}^{n})}
\end{equation}
for all $f\in L_{\omega}^{q,t}(\mathbb{R}^{n})$. Conversely, if \eqref{boundedness of M} holds for all $f\in L_{\omega}^{q,t}(\mathbb{R}^{n})$, then $\omega$ must be an $A_{q}$ weight.
\end{lemma}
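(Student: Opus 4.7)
The plan is to establish the Nikolskii characterization of $B^{\alpha}_{2,\infty}$: for every $B_r\subset\subset\Omega$ and every sufficiently small $h\in\mathbb{R}^{n}$,
\[
\int_{B_r}|\nabla u(x+h)-\nabla u(x)|^{2}\,dx\leq C\,|h|^{2\alpha}.
\]
The argument rests on three pillars — a Calder\'on-Zygmund bootstrap of $|\nabla u|$, a difference-quotient test against the translated equation, and H\"older/Sobolev control of the resulting error terms — and uses in an essential way that $p=2$ and $\omega_{M}(t)=t^{\alpha}$.

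Step 1 (higher integrability). Condition \eqref{3.4} together with $g\in L^{n/\alpha}_{loc}$ yields, by a direct H\"older computation,
\[
\fint_{B_{\rho}(y)}\theta\bigl(A,B_{\rho}(y)\bigr)(x,z)\,dx\leq C\,\|g\|_{L^{n/\alpha}(B_{\rho}(y))},
\]
which is smaller than any prescribed $\delta$ once $\rho$ is chosen small. Hence the hypotheses of the interior Theorem \ref{1-1-1} are satisfied on such balls (with $F\equiv 0$, $\omega\equiv 1$, and arbitrary $q$), giving $\nabla u\in L^{s}_{loc}(\Omega)$ for every $s<\infty$.

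Step 2 (translated equation and test). Fix $B_{4r}\subset\subset\Omega$ and $|h|<r$, set $v(x):=u(x+h)$, $w:=v-u$, and pick $\eta\in C^{\infty}_{c}(B_{2r})$ with $\eta\equiv 1$ on $B_r$. Adding and subtracting $A(x,u,\nabla v)$ and $A(x+h,u,\nabla v)$ in the difference of the two equations, I rewrite
\[
\operatorname{div}\bigl[A(x,u,\nabla v)-A(x,u,\nabla u)\bigr]=-\operatorname{div}\bigl[E_{1}+E_{2}\bigr],
\]
where $E_{1}:=A(x+h,v,\nabla v)-A(x+h,u,\nabla v)$ carries the $z$-dependence and $E_{2}:=A(x+h,u,\nabla v)-A(x,u,\nabla v)$ carries the $x$-dependence. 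Testing against $\varphi=\eta^{2}w$ and using the $p=2$ strong monotonicity from \eqref{growth} plus Young's inequality yields
\[
\int \eta^{2}|\nabla w|^{2}\,dx\leq C\int \eta^{2}\bigl(|E_{1}|^{2}+|E_{2}|^{2}\bigr)\,dx+C\int |\nabla\eta|^{2}|w|^{2}\,dx.
\]
The cutoff term is bounded by $C r^{-2}|h|^{2}\|\nabla u\|_{L^{2}(B_{3r})}^{2}\leq C|h|^{2\alpha}$ since $\alpha<1$. For $E_{2}$, condition \eqref{3.4} combined with H\"older with exponents $n/(2\alpha)$ and $n/(n-2\alpha)$ gives $\int |E_{2}|^{2}\leq|h|^{2\alpha}\|g\|^{2}_{L^{n/\alpha}(B_{3r})}\|\nabla v\|^{2}_{L^{2n/(n-2\alpha)}(B_{2r})}$, which is finite by Step 1. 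For $E_{1}$, the choice $\omega_{M}(t)=t^{\alpha}$ gives $|E_{1}|\leq|w|^{\alpha}|\nabla v|$, and H\"older with exponents $1/\alpha$ and $1/(1-\alpha)$ produces
\[
\int|E_{1}|^{2}\,dx\leq\Bigl(\int|w|^{2}\,dx\Bigr)^{\alpha}\Bigl(\int|\nabla v|^{2/(1-\alpha)}\,dx\Bigr)^{1-\alpha}\leq C|h|^{2\alpha}\|\nabla u\|^{2\alpha}_{L^{2}(B_{3r})}\|\nabla v\|^{2}_{L^{2/(1-\alpha)}(B_{2r})},
\]
where I have used the elementary Sobolev difference-quotient bound $\|w\|_{L^{2}(B_{3r})}\leq|h|\,\|\nabla u\|_{L^{2}(B_{4r})}$ and Step 1 for the last factor. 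Combining everything yields $\int_{B_{r}}|\nabla w|^{2}\leq C|h|^{2\alpha}$, hence $\nabla u\in B^{\alpha}_{2,\infty}(B_{r})$.

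The main obstacle is the $z$-continuity term. A crude $L^{\infty}$ estimate $|w|\leq C|h|^{\beta}$ (via Morrey, coming out of Step 1) would force $\beta=1$, i.e.\ Lipschitz regularity of $u$, in order to recover the clean exponent $|h|^{2\alpha}$; such regularity is beyond what the hypotheses afford. The remedy is to keep $|w|$ in the $L^{2}$-norm, where the Sobolev difference quotient is sharp at order $|h|$, and to transfer the surplus integrability onto $|\nabla v|$ through an $L^{2/(1-\alpha)}$-norm — precisely the integrability granted by the Meyers-type bootstrap of Step 1. This interplay between Theorem \ref{1-1-1} and the power structure $\omega_{M}(t)=t^{\alpha}$ is the decisive ingredient.
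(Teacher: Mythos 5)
Your proposal does not address the statement at hand. The lemma to be proved is the boundedness of the Hardy--Littlewood maximal operator $\mathcal{M}$ on the weighted Lorentz space $L_{\omega}^{q,t}(\mathbb{R}^{n})$ for an $A_{q}$ weight $\omega$, together with the converse assertion that such boundedness forces $\omega\in A_{q}$. What you have written is instead a proof sketch of the Besov regularity result (Theorem \ref{1-2}): a Nikolskii-type difference-quotient estimate for $\nabla u$ obtained by testing the translated equation and invoking the interior Calder\'on--Zygmund theorem. Nothing in your argument concerns the maximal function, the $A_{q}$ condition, the Lorentz quasinorm, or the weak-type/strong-type interpolation that a proof of Lemma \ref{2-2} would require. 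The paper itself offers no proof of this lemma --- it is quoted as a classical fact, resting on Muckenhoupt's theorem ($\mathcal{M}:L^{s}_{\omega}\to L^{s}_{\omega}$ for $\omega\in A_{s}$, combined with the openness of the $A_{q}$ classes from Lemma \ref{jie} and real interpolation to pass from Lebesgue to Lorentz scales, plus the standard converse obtained by testing $\mathcal{M}$ against $\chi_{E}$ for $E\subset B$).

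If your intention was to prove Theorem \ref{1-2}, then your outline is essentially the same as the paper's own argument in Section \ref{section5}: the paper likewise splits the tested difference of the equations into terms carrying the $z$-dependence (estimated via $\omega_{M}(t)=t^{\alpha}$ and H\"older with exponents $1/\alpha$, $1/(1-\alpha)$), the $x$-dependence (estimated via \eqref{3.4}, $g\in L^{n/\alpha}_{loc}$, and H\"older with exponents $n/(2\alpha)$, $n/(n-2\alpha)$), and the cutoff commutator, all controlled through the higher integrability $\nabla u\in L^{s}_{loc}$ supplied by Theorem \ref{1-1-1}. But as a response to the stated lemma, the proposal is a complete miss and would need to be replaced by an argument about the maximal operator.
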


Finally, we recall the following technical lemma, which will be used in the proof of the weighted estimates, which is originally due to \cite{krylov1979estimate}\cite{safonov1983harnack}. The version given below is proved in \cite{mengesha2011weighted}
\begin{lemma}\label{2-3}
Let $\Omega$ be a $(\delta,R)$-Reifenberg flat domain with $\delta<\frac{1}{4}$, Suppose $\omega\in A_{q}$ with $[\omega]_{q}\leq\gamma$ for some $1<q<\infty$ and some $\gamma\geq 1$. Suppose also that  $C,D$ are measurable sets satisfying $C\subset D\subset \Omega$ and there are $\rho_{0} \in \left(0,\frac{R}{2000}\right)$ such that the sequence of balls $\{B_{\rho_{0}}(y_{i})\}_{i=1}^{L}$ with centers $y_{i}\in\overline{\Omega}$ covers $\Omega$, Assume that $\epsilon \in (0,1)$ such that the followings hold,
\begin{enumerate}[(1)]
\item $\omega(C)<\epsilon\,\omega\left(B_{\rho_{0}}(y_{i})\right)$ for all $i=1,\cdots L$,
\item for all $x\in \Omega$ and $\rho \in(0,2\rho_{0})$, if $\omega(C\cap B_{\rho}(x))\geq \epsilon \,\omega(B_{\rho}(x))$, then $B_{\rho}(x)\cap \Omega\subset D$.
\end{enumerate}
Then
\begin{equation*}
  \omega(C)\leq\epsilon_{1}\omega(D), \,\,\,\,\,\,for\,\,\,\,\,\, \epsilon_{1}=\epsilon\left(\frac{10}{1-4\delta}\right)^{nq}\gamma^{2}.
\end{equation*}
\end{lemma}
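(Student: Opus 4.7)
The plan is a classical Vitali-type stopping-time argument adapted to $A_q$-weighted measures on Reifenberg flat domains.

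First, I set up the stopping radius. Since $\omega\in A_q$, the measure $\omega\,dx$ is doubling and the Lebesgue differentiation theorem applies: for $\omega$-a.e.\ $x\in C$ one has $\omega(B_\rho(x)\cap C)/\omega(B_\rho(x))\to 1$ as $\rho\to 0$; denote the full $\omega$-measure set of such density points by $C^*\subset C$. On the other hand, for any $\rho\geq 2\rho_0$ and $x\in C$, $x$ lies in some $B_{\rho_0}(y_i)$, so $B_{\rho_0}(y_i)\subset B_{2\rho_0}(x)\subset B_\rho(x)$, and hypothesis (1) forces $\omega(C\cap B_\rho(x))\leq\omega(C)<\epsilon\,\omega(B_{\rho_0}(y_i))\leq\epsilon\,\omega(B_\rho(x))$. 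For $x\in C^*$ define
\[
r(x):=\sup\bigl\{\rho>0:\omega(C\cap B_\rho(x))\geq\epsilon\,\omega(B_\rho(x))\bigr\}.
\]
Then $r(x)\in(0,2\rho_0)$; the density is strictly less than $\epsilon$ for every $\rho>r(x)$; and, using continuity of $\rho\mapsto\omega(B_\rho(x))$ (since $\omega\,dx$ is atomless) together with left-continuity of $\rho\mapsto\omega(C\cap B_\rho(x))$, the density is at least $\epsilon$ at $\rho=r(x)$ itself. Since $x\in\Omega$ and $r(x)\in(0,2\rho_0)$, hypothesis (2) yields $B_{r(x)}(x)\cap\Omega\subset D$.

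Next, I apply the standard $5r$-Vitali covering lemma to the family $\{B_{r(x)}(x):x\in C^*\}$, whose radii are uniformly bounded by $2\rho_0$. This produces a countable pairwise disjoint subfamily $B_j:=B_{r(x_j)}(x_j)$ with $C^*\subset\bigcup_j 5B_j$. Because $5r(x_j)>r(x_j)$, the stopping-time construction forces $\omega(5B_j\cap C)<\epsilon\,\omega(5B_j)$, hence
\[
\omega(C)=\omega(C^*)\leq\sum_j\omega(5B_j\cap C)\leq\epsilon\sum_j\omega(5B_j).
\]

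It remains to estimate $\sum_j\omega(5B_j)$ by $\omega(D)$ through two applications of the strong doubling property (Lemma \ref{strong doubling}(1)). First, $\omega(5B_j)\leq\gamma\,5^{nq}\,\omega(B_j)$. Second, applied to $B_j\cap\Omega\subset B_j$ together with the Reifenberg-flat domain bound $|B_j|/|B_j\cap\Omega|\leq\bigl(\frac{2}{1-4\delta}\bigr)^n$ (a direct consequence of the $(\delta,R)$-Reifenberg definition applied at a closest boundary point, valid since $\delta<\tfrac14$ and $r(x_j)<R$), one obtains
\[
\omega(B_j)\leq\gamma\left(\frac{|B_j|}{|B_j\cap\Omega|}\right)^q\omega(B_j\cap\Omega)\leq\gamma\left(\frac{2}{1-4\delta}\right)^{nq}\omega(B_j\cap D),
\]
using $B_j\cap\Omega\subset D$ in the last step. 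Disjointness of the $B_j$ gives $\sum_j\omega(B_j\cap D)\leq\omega(D)$, and combining the three estimates yields $\omega(C)\leq\epsilon\,\gamma^2\bigl(\tfrac{10}{1-4\delta}\bigr)^{nq}\omega(D)$, as claimed. The main obstacle is the careful setup of the stopping radius so that hypothesis (2) can be invoked exactly at $\rho=r(x)$ (where left-continuity forces density $\geq\epsilon$) while the density is strictly below $\epsilon$ on the $5$-enlargements used in Vitali; a secondary subtlety is extracting the sharp Reifenberg constant $\frac{2}{1-4\delta}$ rather than a weaker one, so that the announced $\bigl(\frac{10}{1-4\delta}\bigr)^{nq}$ factor comes out precisely.
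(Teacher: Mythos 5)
Your proof is correct: the stopping-radius construction, the $5r$-Vitali covering, the two applications of the strong doubling property from Lemma \ref{strong doubling}(1), and the Reifenberg measure-density bound $|B|/|B\cap\Omega|\le\bigl(\tfrac{2}{1-4\delta}\bigr)^{n}$ fit together to give exactly the stated constant $\epsilon_{1}=\epsilon\bigl(\tfrac{10}{1-4\delta}\bigr)^{nq}\gamma^{2}$. The paper offers no proof of this lemma (it is quoted from \cite{mengesha2011weighted}), and your argument is essentially the standard one given in that reference, so there is nothing to compare beyond noting that you have supplied the omitted details correctly.
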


\subsection{A known approximation estimate.}

For the sake of convenience and simplicity, we use the notation $u,F,A$ and $\Omega$ instead of $\hat{u},\hat{F},\hat{A}$ and $\widehat{\Omega}$ respectively.
Let $\sigma\geq6$ be a universal constant, let $u$ be a weak solution of

\begin{equation*}
  \left\{\begin{array}{r@{\  }c@{\  }ll}
  \operatorname{div}A(x,u,\nabla u)&=& \operatorname{div}(|F|^{p-2}F)  & \mbox{in}\ \ \Omega_{\sigma}\,, \\[0.05cm]
  u&=&0  & \mbox{on}\ \ \partial \Omega_{\sigma}. \\[0.05cm]
\end{array}\right.\eqno(2.5)
\end{equation*}

We consider the limiting problem
\begin{itemize}
  \item interior case:
        $$\operatorname{div}\bar{A}(\nabla h)=0\quad \quad \operatorname{in}\,\,\,\, B_{4}\eqno(2.6)$$
  \item boundary case£º
  \begin{equation*}
  \left\{\begin{array}{r@{\  }c@{\  }ll}
  \operatorname{div}\bar{A}(\nabla h)&=& 0  & \mbox{in}\ \ B_{4}^{+}\,, \\[0.05cm]
  h&=&0  & \mbox{on}\ \  B_{4}\cap\{x_{n}=0\}, \\[0.05cm]
\end{array}\right.\eqno(2.7)
\end{equation*}

\end{itemize}
for the interior case, $\bar{A}(\xi)$ is given by
\begin{equation*}
  \bar{A}(\xi)=\fint_{B_{4}}A(x,\bar{u}_{\Omega_{5}},\xi)\operatorname{d}\!x
\end{equation*}
for the boundary case, $\bar{A}(\xi)$ is given by
\begin{equation*}
  \bar{A}(\xi)=\frac{1}{|B_{4}|}\int_{B_{4}^{+}}A(x,\bar{u}_{\Omega_{5}},\xi)\operatorname{d}\!x
\end{equation*}
where
\begin{equation*}
  \bar{u}_{\Omega_{5}}=\fint_{\Omega_{5}}u(x)\operatorname{d}\!x.
\end{equation*}

We recall a known approximation estimate established in \cite{byun2018global}. This approximation estimate will be used in the proof of Theorem\ref{1-1}.
\begin{lemma}\label{comparison1}(interior case)
For some fixed $\epsilon\in(0,1)$, there exists a constants $\sigma=\sigma(n,p,\Lambda,\omega_{M},M,\epsilon)\geq6$ such that $u\in W_{0}^{1,p}(B_{\sigma})$ is a weak solution of $(2.5)$ with $ \|u\|_{L^{\infty}(B_{\sigma})}\leq\frac{M}{\mu r}$ and satisfies

\begin{equation*}
  \frac{1}{|B_{\sigma}|}\int_{B_{\sigma}}|\nabla u|^{p}\operatorname{d}\!x\leq1
\end{equation*}
Suppose also that there exists some positive number $\delta=\delta(\Lambda,\omega_{M},n,p,M,\epsilon)\in(0,\frac{1}{8})$ such that
\begin{equation*}
  \frac{1}{|B_{5}|}\int_{B_{5}}\theta(A,B_{5})(x,\bar{u}_{B_{5}})\operatorname{d}\!x\leq\delta
\end{equation*}
and
\begin{equation*}
   \frac{1}{|B_{\sigma}|}\int_{B_{\sigma}}|F|^{p}\operatorname{d}\!x\leq\delta^{p}
\end{equation*}
Then there exists a weak solution $h\in W^{1,p}(B_{4})$of $(2.6)$ such that the following inequality holds
\begin{equation*}
 \|\nabla h\|_{L^{\infty}(B_{3})}\leq C\quad and \quad\frac{1}{|B_{4}|}\int_{B_{4}}|\nabla u-\nabla h|^{p}\operatorname{d}\!x\leq\epsilon^{p}.
\end{equation*}
Where $C=C(n,p,\Lambda)>1$.
\end{lemma}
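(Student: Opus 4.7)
The plan is to build $h$ via a three-stage comparison chain in which each stage kills one of the small parameters: the $L^p$ smallness of $F$ (coming from the $\delta^p$ assumption), the local uniform continuity of $A(\cdot,z,\cdot)$ in $z$ (which is small because $|u-\bar u_{B_5}|\le 2M/(\mu r)$ and the latter is small after the rescaling in Section \ref{section2.1}), and the BMO smallness $\delta$ of $\theta(A,B_5)(\cdot,\bar u_{B_5})$ in $x$. The terminal object is the weak solution $h$ of the $x$- and $z$-frozen equation $(2.6)$, to which classical interior $C^{1,\beta}$ regularity for $p$-Laplacian-type equations applies, yielding the Lipschitz estimate on $B_3$.

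\emph{Step 1} (kill $F$): let $w\in u+W_0^{1,p}(B_5)$ be the unique weak solution of $\operatorname{div}A(x,u,\nabla w)=0$ in $B_5$. Subtracting the two weak formulations with test function $\varphi=u-w\in W_0^{1,p}(B_5)$ and using the monotonicity in \eqref{growth} together with Young's inequality on the $\langle|F|^{p-2}F,\nabla(u-w)\rangle$ term produces
\begin{equation*}
\fint_{B_5}|\nabla u-\nabla w|^p\,dx\le C\fint_{B_5}|F|^p\,dx\le C\delta^p.
\end{equation*}
\emph{Step 2} (freeze $u$): let $v\in w+W_0^{1,p}(B_5)$ solve $\operatorname{div}A(x,\bar u_{B_5},\nabla v)=0$ in $B_5$. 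Testing with $\varphi=w-v$, the left-hand side controls $\fint|\nabla w-\nabla v|^p$ by the monotonicity in \eqref{growth}, while the right-hand side uses the local uniform continuity \eqref{c-condition} with $z_1=w$, $z_2=\bar u_{B_5}$ and the bound $|w-\bar u_{B_5}|\le 2M/(\mu r)$. Young's inequality gives
\begin{equation*}
\fint_{B_5}|\nabla w-\nabla v|^p\,dx\le C\bigl(\omega_M(2M/(\mu r))\bigr)^{p'}.
\end{equation*}

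\emph{Step 3} (average $A$ in $x$): let $h\in v+W_0^{1,p}(B_4)$ solve $\operatorname{div}\bar A(\nabla h)=0$ in $B_4$. Testing with $v-h$, the difference of the two equations has integrand $\langle A(x,\bar u_{B_5},\nabla v)-\bar A(\nabla v),\nabla(v-h)\rangle$, pointwise bounded by $\theta(A,B_4)(x,\bar u_{B_5})|\nabla v|^{p-1}|\nabla(v-h)|$. A Hölder split, combined with the self-improvement of integrability for $\nabla v$ provided by Gehring's lemma ($\nabla v\in L^{p+\eta}_{\mathrm{loc}}$ for some $\eta>0$), and the small-BMO hypothesis on $\theta(A,B_5)$, produces
\begin{equation*}
\fint_{B_4}|\nabla v-\nabla h|^p\,dx\le C\delta^{\sigma_0}
\end{equation*}
for some $\sigma_0>0$. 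Adding the three estimates via the triangle inequality and choosing $\delta$ small and $\sigma$ (equivalently $M/(\mu r)$) small enough yields $\fint_{B_4}|\nabla u-\nabla h|^p\le \epsilon^p$.

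For the Lipschitz bound on $\nabla h$, the limit equation $\operatorname{div}\bar A(\nabla h)=0$ has nonlinearity depending only on $\nabla h$ and still satisfying \eqref{growth}, so classical interior regularity (DiBenedetto--Tolksdorf--Uraltseva) gives $\nabla h\in C^{0,\beta}_{\mathrm{loc}}(B_4)$ with $\|\nabla h\|_{L^\infty(B_3)}\le C(\fint_{B_4}|\nabla h|^p\,dx)^{1/p}\le C(n,p,\Lambda)$, where the last step uses the energy bound on $h$ inherited from $v$ and ultimately from $u$. The main obstacle is Step 3: converting BMO smallness into an $L^p$ comparison requires a self-improving integrability for $\nabla v$ and careful Hölder book-keeping so the exponents close; a secondary difficulty is ensuring that $\sigma$ and $\delta$ can be chosen depending only on the stated parameters $n,p,\Lambda,\omega_M,M,\epsilon$, which forces one to absorb all intermediate constants cleanly.
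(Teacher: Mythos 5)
The paper does not prove Lemma~\ref{comparison1}: it explicitly cites it as ``a known approximation estimate established in \cite{byun2018global}.'' So there is no in-paper argument against which to compare yours, only the external reference. Your three-stage chain (remove $F$, freeze the $z$-argument, replace $A$ by its $x$-average, then invoke DiBenedetto--Tolksdorf $C^{1,\beta}$ regularity for the terminal frozen equation) is indeed the standard skeleton for results of this kind and agrees in outline with what Byun--Palagachev--Shin do.

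However, Step~2 contains a concrete scaling error that breaks the argument. Recall from Section~\ref{section2.1} that the rescaled nonlinearity $\hat A(x,z,\xi)=A(rx+x_0,\mu r z,\mu\xi)/\mu^{p-1}$ satisfies
$|\hat A(x,z_1,\xi)-\hat A(x,z_2,\xi)|\leq\omega_M\bigl(\mu r|z_1-z_2|\bigr)|\xi|^{p-1}$
on $z_1,z_2\in[-M/(\mu r),M/(\mu r)]$. Applying this with $z_1=\hat u(x)$ and $z_2=\bar{\hat u}_{B_5}$, the oscillation factor is $\omega_M\bigl(\mu r\,|\hat u-\bar{\hat u}_{B_5}|\bigr)\le\omega_M(2M)$, a \emph{fixed} constant --- not the quantity $\omega_M\bigl(2M/(\mu r)\bigr)$ you wrote, and in any case not small. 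Your assertion that ``$2M/(\mu r)$ is small after the rescaling'' is backwards: with $r=R/(400\sigma)$, increasing $\sigma$ makes $\mu r$ smaller and hence $M/(\mu r)$ \emph{larger}. (Note also that $\omega_M$ is only defined on $[0,2M]$ via the constraint $z_1,z_2\in[-M,M]$, so $\omega_M(2M/(\mu r))$ is not even meaningful when $\mu r<1$.) Consequently, Step~2 as written produces a bound $C\omega_M(2M)^{p'}$ that cannot be absorbed into $\epsilon^p$. The resolution in \cite{byun2018global} is precisely the place where the large parameter $\sigma$ (which the statement declares to depend on $\epsilon$) must do its work: one needs a Poincar\'e-type control of the oscillation $|u-\bar u_{B_5}|$ over the \emph{small} scale $5r$, combined with higher integrability of $\nabla u$ from Gehring's lemma, so that the $\omega_M$-factor becomes small as $\sigma\to\infty$; your proposal uses $\sigma$ only through a vague final sentence and never exploits it where it is actually needed. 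Steps~1 and~3, modulo the usual H\"older bookkeeping you flag, are sound; Step~2 is the genuine gap.
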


\begin{lemma}\label{comparison2}(boundary case)
For some fixed $\epsilon\in(0,1)$, there exists a constants $\sigma=\sigma(\Lambda,\omega_{M},n,p,M,\epsilon)\geq6$ such that $u\in W_{0}^{1,p}(\Omega_{\sigma})$ is a weak solution of $(2.5)$ with $ \|u\|_{L^{\infty}(\Omega_{\sigma})}\leq\frac{M}{\mu r}$ and satisfies

\begin{equation*}
  \frac{1}{|B_{\sigma}|}\int_{\Omega_{\sigma}}|\nabla u|^{p}\operatorname{d}\!x\leq1.
\end{equation*}
Suppose also that there exists some positive number $\delta=\delta(\Lambda,\omega_{M},n,p,M,\epsilon)\in(0,\frac{1}{8})$ such that
\begin{equation*}
  B_{5}^{+}\subset\Omega_{5}\subset B_{5}\cap\{x:x_{n}>-10\delta\},
\end{equation*}
\begin{equation*}
  \frac{1}{|B_{5}|}\int_{\Omega_{5}}\theta(A,\Omega_{5})(x,\bar{u}_{\Omega_{5}})\operatorname{d}\!x\leq\delta,
\end{equation*}
and
\begin{equation*}
   \frac{1}{|B_{\sigma}|}\int_{\Omega_{\sigma}}|F|^{p}\operatorname{d}\!x\leq\delta^{p}.
\end{equation*}
Then there exists a weak solution $h\in W^{1,p}(B_{4}^{+})$of $(2.7)$ such that the following inequality holds
\begin{equation*}
 \|\nabla \bar{h}\|_{L^{\infty}(\Omega_{3})}\leq C\quad and \quad\frac{1}{|B_{4}|}\int_{\Omega_{4}}|\nabla u-\nabla \bar{h}|^{p}\operatorname{d}\!x\leq\epsilon^{p}
\end{equation*}
Where $\bar{h}$ is the zero extension of $h$ from $B_{4}^{+}$ to $B_{4}$, $C=C(\Lambda,n,p)>1$.
\end{lemma}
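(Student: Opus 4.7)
The plan is to prove the comparison via a chain of four successive approximations, at each step replacing one non-standard feature of the original problem by its limiting counterpart and controlling the $L^p$-norm of the difference of gradients by a smallness extracted from the hypotheses. A key point throughout is that after the rescaling of Section \ref{section2.1} we have $\|u\|_{L^\infty(\Omega_\sigma)} \leq M/(\mu r)$, and this quantity can be driven to zero by choosing $\sigma$ (equivalently $\mu r$) large; this is the mechanism that turns the local continuity assumption \eqref{c-condition} into exploitable smallness, so that $\omega_M(2M/(\mu r))$ can be absorbed into the target $\epsilon$.

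First I would let $v \in u + W_0^{1,p}(\Omega_6)$ solve $\operatorname{div}A(x,v,\nabla v)=0$ in $\Omega_6$; testing with $u-v$, using the monotonicity built into \eqref{growth} and the bound $\fint_{\Omega_\sigma}|F|^p \leq \delta^p$ gives $\fint_{\Omega_6}|\nabla u - \nabla v|^p \leq C\delta^{\beta_1}$ for some $\beta_1>0$ depending only on $p$. Next, let $w$ solve $\operatorname{div}A(x,\bar u_{\Omega_5},\nabla w)=0$ in $\Omega_5$ with $w=v$ on $\partial\Omega_5$; the pointwise estimate $|A(x,v,\xi)-A(x,\bar u_{\Omega_5},\xi)| \leq \omega_M(2M/(\mu r))|\xi|^{p-1}$ from \eqref{c-condition} together with a standard test-function calculation yields $\fint_{\Omega_5}|\nabla v - \nabla w|^p \leq C\,\omega_M(2M/(\mu r))^{p/(p-1)}$, which is $\leq \delta^p$ once $\sigma$ is chosen large. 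Third, using $B_5^+ \subset \Omega_5 \subset B_5 \cap \{x_n > -10\delta\}$, compare $w$ with the solution $\tilde w$ of the same frozen equation in $B_5^+$; since $w$ vanishes on $\partial\Omega_5 \cap \{x_n<0\}$, a Gehring-type higher integrability of $|\nabla w|$ combined with Hölder on the slab of measure $O(\delta)$ delivers $\fint_{B_5^+}|\nabla w - \nabla \tilde w|^p \leq C\delta^{\beta_2}$. Fourth, let $h \in \tilde w + W_0^{1,p}(B_4^+)$ solve $\operatorname{div}\bar A(\nabla h)=0$; the difference of nonlinearities is dominated by $\theta(A,\Omega_5)(\cdot,\bar u_{\Omega_5})|\nabla\tilde w|^{p-1}$, whose mean is $\leq\delta$, and Gehring plus Hölder produces $\fint_{B_4^+}|\nabla \tilde w - \nabla h|^p \leq C\delta^{\beta_3}$.

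To finish, invoke the classical up-to-boundary $C^{1,\alpha}$ regularity of Lieberman--Uhlenbeck type for the autonomous $p$-type equation $\operatorname{div}\bar A(\nabla h)=0$ in $B_4^+$ with $h=0$ on $\{x_n=0\}$: this produces $\|\nabla h\|_{L^\infty(B_3^+)} \leq C(n,p,\Lambda)$, which transfers to $\|\nabla \bar h\|_{L^\infty(\Omega_3)} \leq C$ since $\bar h \equiv 0$ on the slab $\Omega_3 \setminus B_3^+ \subset \{x_n<0\}$. Summing the four comparison estimates, and then choosing $\sigma$ large and $\delta$ small in terms of $\epsilon$ so that every term is $\leq (\epsilon/4)^p$, yields $\fint_{\Omega_4}|\nabla u - \nabla \bar h|^p \leq \epsilon^p$.

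The main obstacle is the coupled choice of the two smallness parameters. Step~2 demands $\mu r$ large (hence $\sigma$ large) so that $\omega_M(2M/(\mu r))$ is tiny, while Steps 3 and 4 demand $\delta$ small; at the same time the Gehring exponents $\beta_2,\beta_3$ depend on the geometry and the structure constants but must stay uniform in the remaining parameters. The delicate bookkeeping is to ensure that all constants in the chain remain quantitatively independent of $\sigma$ and that the final parameter dependence of $\delta$ and $\sigma$ closes as $\delta = \delta(\Lambda,\omega_M,n,p,M,\epsilon)$ and $\sigma = \sigma(\Lambda,\omega_M,n,p,M,\epsilon)$ without circularity, exactly as stated in the lemma.
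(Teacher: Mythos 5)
The paper does not prove this lemma at all: it is quoted verbatim as ``a known approximation estimate established in \cite{byun2018global}'', so the relevant comparison is with the proof in that reference. Your four-step chain (compare with the homogeneous problem, freeze the $z$-variable, flatten the Reifenberg boundary, freeze the $x$-variable via the BMO hypothesis, then invoke Lieberman-type boundary Lipschitz regularity for $\operatorname{div}\bar A(\nabla h)=0$) is indeed the standard architecture used there, and Steps 1, 3, 4 and the $L^\infty$ bound for $\nabla\bar h$ are fine modulo routine Gehring/H\"older technicalities.

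There is, however, a genuine gap in Step 2, which is precisely the step where the $u$-dependence of $A$ --- the whole point of this class of equations --- must be handled. You bound the freezing error by $\omega_M(2M/(\mu r))$ and claim this ``is $\le\delta^p$ once $\sigma$ is chosen large'', asserting that $\sigma$ and $\mu r$ are interchangeable. They are not: in the rescaling of Section \ref{section2.1}, $r$ is the radius of the covering ball and $\mu$ the good-$\lambda$ level, both dictated by the stopping-time argument of Lemma \ref{3-2}, where $r$ ranges over $(0,R/\sigma]$ and $\mu$ over all levels; in particular $\mu r$ can be arbitrarily small, so $M/(\mu r)$ can be arbitrarily large, and $\omega_M(2M/(\mu r))$ need not be small at all. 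This is also visible from the statement itself: $\sigma$ depends only on $(\Lambda,\omega_M,n,p,M,\epsilon)$, not on $\mu$ or $r$, so no choice of $\sigma$ can control $M/(\mu r)$. The correct mechanism --- and the reason the lemma is formulated on the large ball $B_\sigma$ with the conclusion only on $B_4$ --- is oscillation decay: by the De Giorgi--Nash--Moser H\"older estimate (up to the flat/Reifenberg boundary) one has $\operatorname{osc}_{\Omega_6}\hat u\le C(6/\sigma)^{\beta}\operatorname{osc}_{\Omega_\sigma}\hat u+(\text{data})\le C(6/\sigma)^{\beta}\cdot 2M/(\mu r)+\cdots$, and since the rescaled continuity condition reads $|\hat A(x,z_1,\xi)-\hat A(x,z_2,\xi)|\le\omega_M(\mu r|z_1-z_2|)|\xi|^{p-1}$, the factor $\mu r$ cancels and the freezing error is controlled by $\omega_M\bigl(C(6/\sigma)^{\beta}M+\cdots\bigr)$, which is made small by taking $\sigma$ large; one then still needs the concavity of $\omega_M$, Jensen's inequality and a reverse-H\"older estimate to convert this into an $L^{p'}$-bound on $\omega_M(|v-\bar u_{\Omega_5}|)|\nabla v|^{p-1}$. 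Without this ingredient your chain does not close, and the stated dependence $\sigma=\sigma(\Lambda,\omega_M,n,p,M,\epsilon)$ cannot be achieved.
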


\section{Weighted estimates.}\label{section3}

\begin{lemma}\label{3-2}
Let $p\geq1$, $\gamma>1$ and $\epsilon>0$ sufficiently small. Then there exists sufficiently large number $N=N(n,p,\Lambda)>1$, some positive number $\delta=\delta(n,p,\Lambda,\epsilon,\gamma, M,\omega_{M})>0$ and $\sigma=\sigma(n,p,\Lambda,\epsilon, M,\omega_{M})\geq6$ such that the following statement holds. Suppose that $u\in W_{0}^{1,p}(\Omega)$ is a weak solution of (1.1) with $\|u\|_{L^{\infty}(\Omega)}\leq M$ and the nonlinearity $A(x,z,\xi)$ satisfies \eqref{small BMO}. If $\Omega$ is a $(\delta,R)$-Reifenberg flat domain and for $\forall y\in\Omega$, $\forall r\in\left(0,\frac{R}{\sigma}\right]$, we have
\begin{equation*}
   B_{r}(y)\cap\left\{x\in\Omega:\mathcal{M}(|\nabla u|^{p})\leq\left(\frac{6}{7}\right)^n\mu^{p}\right\}\cap\left\{x\in\Omega:\mathcal{M}(|F|^{p})\leq \left(\frac{6}{7}\right)^n\mu^{p}\delta^{p}\right\}\neq \emptyset
\end{equation*}
then
\begin{equation*}
   \omega\left(B_{r}(y)\cap\left\{x\in\Omega:\mathcal{M}(|\nabla u|^{p})>\left(\frac{6}{7}\right)^n\mu^{p} N^p\right\}\right)<\epsilon\omega(B_{r}(y))
\end{equation*}
for $\omega\in A_{q}$ with $[\omega]_{q}\leq\gamma$ and $q>1$.
\end{lemma}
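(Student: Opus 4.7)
The plan is a good-$\lambda$ density estimate via rescaling and approximation by Lemmas~\ref{comparison1}--\ref{comparison2}, converted at the end to a weighted statement by the strong doubling property of $A_q$ weights. First I would pick a good point $x^* \in B_r(y) \cap \Omega$ furnished by the hypothesis and rescale via $\hat u(z) = u(rz+y)/(\mu r)$, $\hat F(z) = F(rz+y)/\mu$, with $\hat A, \hat\Omega$ as in Section~\ref{section2.1}. Since $B_{\sigma r}(y) \subset B_{7\sigma r/6}(x^*)$ once $\sigma \ge 6$, the two maximal-function bounds at $x^*$ translate, after rescaling, to $\fint_{B_\sigma \cap \hat\Omega} |\nabla \hat u|^p \le 1$ and $\fint_{B_\sigma \cap \hat\Omega} |\hat F|^p \le \delta^p$; the small-BMO of $\hat A$ and the $(\delta, R/r)$-Reifenberg flatness of $\hat\Omega$ follow from Section~\ref{section2.1}.

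Next, I would split into interior ($B_{\sigma r}(y) \subset \Omega$) and boundary cases. In the interior case, apply Lemma~\ref{comparison1} with a free parameter $\epsilon_0 > 0$ to produce an $\bar A$-harmonic $h$ on $B_4$ with $\|\nabla h\|_{L^\infty(B_3)} \le C_0 = C_0(n,p,\Lambda)$ and $\fint_{B_4} |\nabla \hat u - \nabla h|^p \le \epsilon_0^p$. In the boundary case, pick a boundary point within distance $\sigma r$ of $y$ and use $(\delta, R)$-Reifenberg flatness to rotate coordinates so that the inclusion $B_5^+ \subset \hat\Omega_5 \subset B_5 \cap \{x_n > -10\delta\}$ required by Lemma~\ref{comparison2} is met, yielding $\bar h$ (the zero extension of $h$) with the analogous bounds.

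Now I would bound the rescaled bad set $\tilde E \subset B_1$ corresponding to $E$. The good point $\tilde x^* \in B_1$ with $\mathcal{M}(|\nabla \hat u|^p)(\tilde x^*) \le (6/7)^n$ controls the large-radii part of the maximal function: for $z \in B_1$ and $\rho \ge 2$, simple ball inclusions give $\fint_{B_\rho(z)} |\nabla \hat u|^p \le C(n)$. Choose $N = N(n,p,\Lambda)$ large enough to dominate both $C(n)$ and $2^{p-1} C_0^p$; the split $|\nabla \hat u|^p \le 2^{p-1}(|\nabla \hat u - \nabla h|^p + C_0^p)$ on $B_3$ then forces any $z \in \tilde E$ to satisfy $\mathcal{M}(|\nabla \hat u - \nabla h|^p \chi_{B_3})(z) > c(n,p,\Lambda)\, N^p$. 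Weak-$(1,1)$ boundedness of $\mathcal{M}$ together with the approximation estimate yields $|\tilde E| \le C(n,p,\Lambda)\, \epsilon_0^p$, hence $|E|/|B_r(y)| \le C(n,p,\Lambda)\, \epsilon_0^p$ after undoing the scaling.

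Finally, Lemma~\ref{strong doubling}(2) upgrades this to $\omega(E) \le C(\gamma, n)(|E|/|B_r(y)|)^\alpha\, \omega(B_r(y))$ for some $\alpha = \alpha(\gamma, n) > 0$, and I would fix $\epsilon_0$ (and consequently $\delta$, $\sigma$ via Lemmas~\ref{comparison1}--\ref{comparison2}) small enough that $C(\gamma, n)(C\epsilon_0^p)^\alpha < \epsilon$. The hard part will be the boundary case: one must carefully locate a nearby boundary point, rotate coordinates so the sandwich geometry demanded by Lemma~\ref{comparison2} holds at the rescaled scale, and still preserve the averaged bounds on $|\nabla \hat u|^p$ and $|\hat F|^p$ coming from the good-point hypothesis on the enlarged ball. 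The interior version is straightforward; the good-point trick that tames the non-local tails of $\mathcal{M}$ is standard but has to be coordinated with the universal choice of $N$.
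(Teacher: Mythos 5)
Your proposal follows essentially the same route as the paper: rescale to unit scale via a good point furnished by the non-empty intersection, invoke Lemmas~\ref{comparison1}--\ref{comparison2} (splitting into interior/boundary cases) to produce a bounded comparison gradient, split the maximal function into small- and large-radii regimes using the good point to tame the tails, pick $N=N(n,p,\Lambda)$ to absorb both the comparison bound and the large-radius constant, apply weak-$(1,1)$ to get a Lebesgue-measure density estimate, and finally upgrade to a weighted estimate via Lemma~\ref{strong doubling}(2). The only organizational difference is that the paper first proves the unweighted density estimate on $B_1$ as a self-contained Step~1 and then rescales in Step~2, whereas you rescale at the outset; the content is identical.
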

\begin{proof}
We divide the proof into two steps.

$Step1$. We begin by proof an unweighted estimate.

Suppose that $\hat{u}\in W_{0}^{1,p}(\widehat{\Omega})$ is a weak solution of (2.5) with $\|\hat{u}\|_{L^{\infty}(\widehat{\Omega})}\leq\frac{M}{\mu r}$ and the nonlinearity $\hat{A}(x,z,\xi)$ satisfies
\begin{equation}\label{small BMO1}
  \sup_{-\frac{M}{\mu r}\leq z\leq \frac{M}{\mu r}}\sup_{0<\rho\leq \sigma}\sup_{y\in \mathbb{R}^{n}}\fint_{B_{\rho}(y)}\theta\left(A,B_{\rho}(y)\right)(x,z)\operatorname{d}\!x\leq\delta.
\end{equation}
If $\widehat{\Omega}$ is a $(\delta,\sigma)$-Reifenberg flat domain and
\begin{equation}\label{3.7}
   B_{1}\cap\left\{x\in\widehat{\Omega}:\mathcal{M}(|\nabla \hat{u}|^{p})\leq\left(\frac{6}{7}\right)^n\right\}\cap\left\{x\in\widehat{\Omega}:\mathcal{M}(|\hat{F}|^{p})\leq \left(\frac{6}{7}\right)^n\delta^{p}\right\}\neq \emptyset
\end{equation}
then, we claim that
\begin{equation}\label{3-1}
   \left|B_{1}\cap\left\{x\in\widehat{\Omega}:\mathcal{M}(|\nabla\hat{u}|^{p})>\left(\frac{6}{7}\right)^n N^p\right\}\right|<\epsilon\left|B_{1}\right|
\end{equation}

In fact, For a given $\epsilon>0$, let $\epsilon'=\epsilon'(n,p,\Lambda,\epsilon)>0$ be a positive number to be determined later. Then, let $\delta=\delta(n,p,\Lambda,\epsilon',M,\omega_{M})>0$ be the number defined in Lemma\ref{comparison1} and Lemma\ref{comparison2}. We prove the claim \eqref{3-1} with this choice of $\delta$. By the assumption \eqref{3.7}, we can discover that there exists $x_{0}$ such that
\begin{equation}\label{4.3}
  x_{0}\in B_{1}\cap\left\{x\in\widehat{\Omega}:\mathcal{M}(|\nabla \hat{u}|^{p})\leq\left(\frac{6}{7}\right)^{n}\right\}\cap\left\{x\in\widehat{\Omega}:\mathcal{M}(|\hat{F}|^{p})\leq \left(\frac{6}{7}\right)^{n}\delta^{p}\right\}
\end{equation}
Since $x_{0}\in B_{1}$, we can easily obtain $B_{\rho}\subset B_{\rho+1}(x_{0})$. For $\forall\rho\geq6$, it follows that
\begin{equation*}
  \frac{1}{|B_{\rho}|}\int_{\widehat{\Omega}_{\rho}}|\nabla \hat{u}|^{p}\operatorname{d}\!x\leq\left(\frac{\rho+1}{\rho}\right)^{n}\frac{1}{|B_{\rho+1}(x_{0})|}\int_{\widehat{\Omega}_{\rho+1}(x_{0})}|\nabla \hat{u}|^{p}\operatorname{d}\!x\leq\left(\frac{7}{6}\right)^{n}\left(\frac{6}{7}\right)^{n}=1
\end{equation*}
\begin{equation*}
  \frac{1}{|B_{\rho}|}\int_{\widehat{\Omega}_{\rho}}| \hat{F}|^{p}\operatorname{d}\!x\leq\left(\frac{7}{6}\right)^{n}\frac{1}{|B_{\rho+1}(x_{0})|}\int_{\widehat{\Omega}_{\rho+1}(x_{0})}| \hat{F}|^{p}\operatorname{d}\!x\leq\delta^{p}.
\end{equation*}
Owing to the nonlinearity $\hat{A}(x,z,\xi)$ satisfies \eqref{small BMO1}, all conditions in Lemma\ref{comparison1} and Lemma\ref{comparison2} are satisfied. Thus, one can find $H\in L^{\infty}(\widehat{\Omega}_{3})$ such that
\begin{equation}\label{4.4}
 \frac{1}{|B_{4}|}\int_{\widehat{\Omega}_{4}}|\nabla\hat{u}-H|^{p}\operatorname{d}\!x\leq C(n)\epsilon'^{p},\ \ \ \|H\|_{L^{\infty}(\widehat{\Omega}_{3})}\leq C_{*}
\end{equation}
Take $N^{p}=\max\{4^{p}\left(\frac{7}{6}\right)^{n}C_{*}^{p},2^{n}\}$, we claim that
\begin{equation}\label{4.5}
 B_{1}\cap\{x\in \widehat{\Omega}:\mathcal{M}_{\widehat{\Omega}_{4}}\left(|\nabla \hat{u}-H|^{p}\right)(x)\leq C_{*}^{p}\}\subset
  B_{1}\cap\left\{x\in \widehat{\Omega}:\mathcal{M}(|\nabla \hat{u}|^{p})(x)\leq \left(\frac{6}{7}\right)^{n}N^{p}\right\}
\end{equation}
In order to prove this statement, assume that $x$ is a point in the set on the left side of \eqref{4.5}, for any $r'>0$, if $r'<2$, note that $B_{r'}(x)\subset B_{3}$, as a result, we have
\begin{eqnarray*}
   && \left(\frac{1}{|B_{r'}(x)|}\int_{\widehat{\Omega}_{r'}(x)}|\nabla \hat{u}(z)|^{p}\operatorname{d}\!z\right) ^{\frac{1}{p}}\\
   &\leq& 2\left(\frac{1}{|B_{r'}(x)|}\int_{\widehat{\Omega}_{r'}(x)}|\nabla \hat{u}(z)-H(z)|^{p}\operatorname{d}\!z\right) ^{\frac{1}{p}}+2\left(\frac{1}{|B_{r'}(x)|}\int_{\widehat{\Omega}_{r'}(x)}|H|^{p}\operatorname{d}\!z\right) ^{\frac{1}{p}} \\
   &\leq& 2\left(\mathcal{M}_{\widehat{\Omega}_{4}}\left(|\nabla u-H|^{p}\right)(x)\right)^{\frac{1}{p}}+2\|H\|_{L^{\infty}(\widehat{\Omega}_{3})} \\
   &\leq& 4C_{*} \\
   &\leq& \left(\frac{6}{7}\right)^{\frac{n}{p}}N
\end{eqnarray*}
If $r'\geq2$, then $B_{r'}(x)\subset B_{2r'}(x_{0})$, we have from this and \eqref{4.3} that
\begin{eqnarray*}
  \frac{1}{|B_{r'}(x)|}\int_{\widehat{\Omega}_{r'}(x)}|\nabla \hat{u}(z)|^{p}\operatorname{d}\!z
   &\leq& \left(\frac{2r'}{r'}\right)^{n}\frac{1}{|B_{2r'}(x_{0})|}\int_{\widehat{\Omega}_{2r'}(x_{0})}|\nabla \hat{u}(z)|^{p}\operatorname{d}\!z \\
   &\leq& 2^{n}\mathcal{M}(|\nabla \hat{u}|^{p})(x_{0}) \\
   &\leq& 2^{n}\left(\frac{6}{7}\right)^{n} \\
   &\leq& \left(\frac{6}{7}\right)^{n}N^{p}
\end{eqnarray*}
Hence, we have proved that \eqref{4.5} holds. It follows that
\begin{equation*}
B_{1}\cap\left\{x\in \widehat{\Omega}:\mathcal{M}(|\nabla \hat{u}|^{p})(x)> \left(\frac{6}{7}\right)^{n}N^{p}\right\}\subset E:=B_{1}\cap\left\{x\in \widehat{\Omega}:\mathcal{M}_{\widehat{\Omega}_{4}}\left(|\nabla \hat{u}-H|^{p}\right)(x)> C_{*}^{p}\right\}
\end{equation*}
In addition, owing to the weak (1,1)-type estimate of Hardy-Littlewood maximal function, we have
\begin{equation*}
  |E|\leq\frac{C(n)}{C_{*}^{p}}\int_{\widehat{\Omega}_{4}}|\nabla \hat{u}-H|^{p}\operatorname{d}\!z
\end{equation*}
Then we can get
\begin{equation}\label{4.6}
  \frac{|E|}{|B_{1}|}\leq\frac{C(n)}{C_{*}^{p}}\frac{1}{|B_{4}|}\int_{\widehat{\Omega}_{4}}|\nabla \hat{u}-H|^{p}\operatorname{d}\!z\leq C'(n,p,\Lambda)\epsilon'^{p}
\end{equation}
where the last inequality is due to \eqref{4.4}. Finally, the estimate of \eqref{3-1} follows by making use of the definition of $E$ and choosing $\epsilon'=\epsilon'(n,p,\Lambda,\epsilon)$ such that $ C'(n,p,\Lambda,\gamma)\epsilon'^{p}=\epsilon$

$Step2.$ We will use properties of $A_{q}$ weights and the translation scaling invariance of Lebesgue measure to obtain a weighted version.

For $\forall y\in\Omega$, define
\begin{equation*}
  \widehat{\Omega}=\left\{\frac{x-y}{r},x\in\Omega\right\}\quad\quad\hat{A}(x,z,\xi)=\frac{A(rx+y,\mu rz,\mu\xi)}{\mu^{p-1}}
\end{equation*}
\begin{equation*}
  \hat{u}(x)=\frac{u(rx+y)}{\mu r}\quad\quad\quad\hat{F}(x)=\frac{F(rx+y)}{\mu}
\end{equation*}
then, $\hat{A}(x,z,\xi)$ satisfies \eqref{small BMO1}, $\hat{u}\in W_{0}^{1,p}(\widehat{\Omega})$ is weak solution of (2.5) with $\|\hat{u}\|_{L^{\infty}(\widehat{\Omega})}\leq\frac{M}{\mu r}$ and $\widehat{\Omega}$ is $(\delta,\frac{R}{r})$-Reifenberg flat domain. By the assumption, there exists $x_{0}\in\Omega_{\rho}(y)$ such that

\begin{equation*}
  \sup_{\rho}\frac{1}{|B_{\rho}(x_{0})|}\int_{\Omega_{\rho}(x_{0})}|\nabla u|^{p}\operatorname{d}\!x\leq\left(\frac{6}{7}\right)^{n}\mu^{p}
\end{equation*}
and
\begin{equation*}
  \sup_{\rho}\frac{1}{|B_{\rho}(x_{0})|}\int_{\Omega_{\rho}(x_{0})}|F|^{p}\operatorname{d}\!x\leq\left(\frac{6}{7}\right)^{n}\mu^{p}\delta^{p}
\end{equation*}
then we can derive that $z_{0}=\frac{x_{0}-y}{r}\in B_{1}$ and $z_{0}\in\widehat{\Omega}$, it follows that
\begin{eqnarray*}
  \mathcal{M}(|\nabla \hat{u}|^{p})(z_{0})
  &=& \sup_{\rho}\frac{1}{|B_{\rho}(z_{0})|}\int_{\widehat{\Omega}_{\rho}(z_{0})}|\nabla\hat{u}(z)|^{p}\operatorname{d}\!z \\
  &=& \sup_{\rho}\frac{1}{|B_{\rho}(z_{0})|}\int_{\widehat{\Omega}_{\rho}\left(\frac{x_{0}-y}{r}\right)}|\nabla u(rz+y)|^{p}\mu^{-p}\operatorname{d}\!z \\
  &=& \mu^{-p}\sup_{\rho}\frac{1}{|B_{\rho}(z_{0})|}\int_{\Omega_{r\rho}(x_{0})}|\nabla u(t)|^{p}r^{-n}\operatorname{d}\!t \\
  &=& \mu^{-p}\sup_{\rho}\frac{1}{|B_{r\rho}(x_{0})|}\int_{\Omega_{r\rho}(x_{0})}|\nabla u(t)|^{p}\operatorname{d}\!t\\
  &=& \mu^{-p}\mathcal{M}(|\nabla u|^{p})(x_{0})\\
  &\leq&\left(\frac{6}{7}\right)^{n}
\end{eqnarray*}
Similarily,
\begin{equation*}
  \mathcal{M}(|\hat{F}|^{p})(z_{0})=\mu^{-p}\mathcal{M}(|F|^{p})(x_{0})\leq\left(\frac{6}{7}\right)^{n}\delta^{p}.
\end{equation*}
Hence, all conditions in $Step1$ are satisfied and as can be seen from the above process
\begin{equation}\label{trans1}
  \mathcal{M}(|\nabla \hat{u}|^{p})\left(\frac{x-y}{r}\right)=\mu^{-p} \mathcal{M}(|\nabla u|^{p})(x)\quad and\quad
  \mathcal{M}(|\hat{F}|^{p})\left(\frac{x-y}{r}\right)=\mu^{-p} \mathcal{M}(|F|^{p})(x)
\end{equation}
From $Step1$, we have
\begin{equation*}
  \left|B_{1}\cap\left\{z\in\widehat{\Omega}:\mathcal{M}(|\nabla\hat{u}|^{p})(z)>\left(\frac{6}{7}\right)^n N^p\right\}\right|<\epsilon\left|B_{1}\right|
\end{equation*}
Since Lebesgue measure is scale and translation invariant, it follows that
\begin{equation*}
  \left|B_{r}(y)\cap\left\{x\in\Omega:\mathcal{M}(|\nabla u|^{p})(x)>\left(\frac{6}{7}\right)^n\mu^{p} N^p\right\}\right|<\epsilon\left|B_{r}(y)\right|
\end{equation*}
where we used \eqref{trans1}. Combining this and Lemma\ref{strong doubling}(2), we can derive that
\begin{equation*}
   \omega\left(B_{r}(y)\cap\left\{x\in\Omega:\mathcal{M}(|\nabla u|^{p})>\left(\frac{6}{7}\right)^n\mu^{p} N^p\right\}\right)<C\epsilon^{\alpha}\omega(B_{r}(y))
\end{equation*}
Thus, the Lemma follows in view of the arbitrariness of $\epsilon$.
\end{proof}

\begin{lemma}\label{4-2}
Let $p\geq1$, $\gamma>1$ $\sigma=\sigma(n,p,\Lambda,\epsilon, M,\omega_{M})\geq6$ and $\epsilon>0$ sufficiently small. Let $\{B_{r}(y_{i})\}_{i=1}^{L}$ be a sequence of balls with centers $y_{i}\in\overline{\Omega}$ and a common radius $0<r<\frac{R}{400\sigma}$ Then there exists sufficiently large number $N=N(n,p,\Lambda)>1$ and some positive number $\delta=\delta(n,p,\Lambda,\epsilon,\gamma,M,\omega_{M})>0$, such that the following statement holds. Suppose that $u\in W_{0}^{1,p}(\Omega)$ is a weak solution of \eqref{model} with $\|u\|_{L^{\infty}(\Omega)}\leq M$ and the nonlinearity $A(x,z,\xi)$ satisfies \eqref{small BMO}. If $\Omega$ is a $(\delta,R)$-Reifenberg flat domain and the following inequality holds
\begin{equation}\label{4.8}
  \omega\left(\left\{x\in\Omega:\mathcal{M}(|\nabla u|^{p})>\left(\frac{6}{7}\right)^{n}\mu^{p}N^{p}\right\}\right)\leq \epsilon\omega(B_{r}(y_{i}))
\end{equation}
for some $\omega\in A_{q}$, $q>1$ and $[\omega]_{q}\leq\gamma$. Then, we have
\begin{eqnarray}\label{4.9}
  &&\omega\left(\left\{x\in\Omega:\mathcal{M}(|\nabla u|^{p})>\left(\frac{6}{7}\right)^{n}\mu^{p}N^{p}\right\}\right)\nonumber\\
  &\leq&\epsilon_{1}\omega\left(\left\{x\in\Omega:\mathcal{M}(|\nabla u|^{p})>\left(\frac{6}{7}\right)^{n}\mu^{p}\right\}\right)+\epsilon_{1}\omega\left(\left\{x\in\Omega:\mathcal{M}(|F|^{p})>\left(\frac{6}{7}\right)^{n}\mu^{p}\delta^{p}\right\}\right)
\end{eqnarray}
where $\epsilon_{1}$ is defined in Lemma\ref{2-3}
\end{lemma}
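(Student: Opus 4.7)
\medskip

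\noindent\textbf{Proof proposal for Lemma \ref{4-2}.} The plan is to set up this good-$\lambda$ type inequality as a direct application of the measure-theoretic Lemma \ref{2-3}, using Lemma \ref{3-2} to verify the density-implication hypothesis. Define the two sets
\begin{equation*}
  C:=\left\{x\in\Omega:\mathcal{M}(|\nabla u|^{p})(x)>\left(\tfrac{6}{7}\right)^{n}\mu^{p}N^{p}\right\}
\end{equation*}
and
\begin{equation*}
  D:=\left\{x\in\Omega:\mathcal{M}(|\nabla u|^{p})(x)>\left(\tfrac{6}{7}\right)^{n}\mu^{p}\right\}\cup\left\{x\in\Omega:\mathcal{M}(|F|^{p})(x)>\left(\tfrac{6}{7}\right)^{n}\mu^{p}\delta^{p}\right\}.
\end{equation*}
Clearly $C\subset D\subset\Omega$. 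I will choose $\rho_{0}=r$; since $\sigma\geq 6$ we have $r<R/(400\sigma)<R/2000$, so $\rho_{0}$ lies in the admissible range of Lemma \ref{2-3}, and the given cover $\{B_{r}(y_{i})\}_{i=1}^{L}$ plays the role of the covering balls.

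\emph{Verification of hypothesis $(1)$ of Lemma \ref{2-3}.} This is precisely the standing assumption \eqref{4.8}: for every $i=1,\ldots,L$,
\begin{equation*}
  \omega(C)\leq\epsilon\,\omega(B_{r}(y_{i})).
\end{equation*}

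\emph{Verification of hypothesis $(2)$ of Lemma \ref{2-3}.} I argue by contraposition. Fix $x\in\Omega$ and $\rho\in(0,2\rho_{0})=(0,2r)$, and suppose $B_{\rho}(x)\cap\Omega\not\subset D$. Then there exists some $x_{*}\in B_{\rho}(x)\cap\Omega$ that lies in neither of the two sets whose union is $D$; that is,
\begin{equation*}
  \mathcal{M}(|\nabla u|^{p})(x_{*})\leq\left(\tfrac{6}{7}\right)^{n}\mu^{p}\qquad\text{and}\qquad\mathcal{M}(|F|^{p})(x_{*})\leq\left(\tfrac{6}{7}\right)^{n}\mu^{p}\delta^{p}.
\end{equation*}
Thus the intersection in the hypothesis of Lemma \ref{3-2} (taken with the ball $B_{\rho}(x)$) is non-empty. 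Note $\rho<2r<R/(200\sigma)\leq R/\sigma$, so we are in the admissible range for Lemma \ref{3-2}. Applying that lemma yields $\omega(C\cap B_{\rho}(x))<\epsilon\,\omega(B_{\rho}(x))$, which is the contrapositive of what we needed.

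\emph{Conclusion.} Both hypotheses of Lemma \ref{2-3} are verified, so
\begin{equation*}
  \omega(C)\leq\epsilon_{1}\,\omega(D),\qquad\epsilon_{1}=\epsilon\left(\frac{10}{1-4\delta}\right)^{nq}\gamma^{2}.
\end{equation*}
Finally, I apply subadditivity of $\omega$ to split $\omega(D)$ into the two terms defining $D$, which yields precisely \eqref{4.9}.

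The main substantive step is the verification of hypothesis $(2)$: everything hinges on correctly matching the small parameter $\epsilon$ in Lemma \ref{3-2} with the same $\epsilon$ appearing in Lemma \ref{2-3}, and on checking that the radius condition $\rho<R/\sigma$ required by Lemma \ref{3-2} is indeed guaranteed by the assumption $r<R/(400\sigma)$. The rest is bookkeeping.
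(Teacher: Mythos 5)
Your proof is correct and follows the same route as the paper: define $C$ and $D$ exactly as you do, invoke Lemma \ref{3-2} to verify the density-implication hypothesis of Lemma \ref{2-3}, and conclude by subadditivity. The paper gives only these definitions and a one-line citation of Lemmas \ref{2-3} and \ref{3-2}; your contrapositive verification of hypothesis $(2)$ and the radius bookkeeping ($\rho<2r<R/\sigma$, $r<R/2000$) are exactly the details being elided.
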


\begin{proof}
Let $N$, $\delta$ be defined as in Lemma\ref{3-2}, let
\begin{equation*}
  C=\left\{x\in\Omega:\mathcal{M}(|\nabla u|^{p})(x)>\left(\frac{6}{7}\right)^{n}\mu^{p}N^{p}\right\}
\end{equation*}
and
\begin{equation*}
  D=\left\{x\in\Omega:\mathcal{M}(|\nabla u|^{p})(x)>\left(\frac{6}{7}\right)^{n}\mu^{p}\right\}\cup\left\{x\in \Omega:\mathcal{M}(|F|^{p})(x)>\left(\frac{6}{7}\right)^{n}\mu^{p}\delta^{p}\right\}
\end{equation*}
by applying Lemma\ref{2-3} and Lemma\ref{3-2}, we can complete the proof of the Lemma.
\end{proof}

\begin{corollary}\label{4-3}
Let $p\geq1$, $\gamma>1$ and let $\Omega,\{B_{r}(y_{i})\}_{i=1}^{L},\epsilon,N,\delta$ be as in Lemma\ref{4-2}. Suppose that $u\in W_{0}^{1,p}(\Omega)$ is a weak solution of \eqref{model} with $\|u\|_{L^{\infty}(\Omega)}\leq M$ and the nonlinearity $A(x,z,\xi)$ satisfies \eqref{small BMO}. If
\begin{equation}\label{4.10}
  \omega\left(\left\{x\in\Omega:\mathcal{M}(|\nabla u|^{p})>\left(\frac{6}{7}\right)^{n}\mu^{p}N^{p}\right\}\right)\leq \epsilon\omega(B_{r}(y_{i}))
\end{equation}
for some $\omega\in A_{q}$, $q>1$ and $[\omega]_{q}\leq\gamma$. For $\forall\beta>0$, set $\epsilon_{2}=\max\{1,2^{\beta-1}\}\epsilon_{1}^{\beta}$, where $\epsilon_{1}$ is defined in Lemma\ref{2-3}, then we have
\begin{eqnarray*}\label{4.11}
   \omega\left(\left\{x\in\Omega:\mathcal{M}(|\nabla u|^{p})>\left(\frac{6}{7}\right)^{n}\mu^{p}N^{pk}\right\}\right)^{\beta}
   &\leq&\epsilon_{2}^{k}\omega\left(\left\{x\in\Omega:\mathcal{M}(|\nabla u|^{p})>\left(\frac{6}{7}\right)^{n}\mu^{p}\right\}\right)^{\beta}\nonumber\\
   &+&\sum_{i=1}^k\epsilon_{2}^{i}\omega\left(\left\{x\in\Omega:\mathcal{M}(|F|^{p})>\left(\frac{6}{7}\right)^{n}\mu^{p}\delta^{p}N^{p(k-i)}\right\}\right)^{\beta}
\end{eqnarray*}
\end{corollary}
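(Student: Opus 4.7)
The plan is to prove the corollary by induction on $k$, iterating Lemma~\ref{4-2} with the threshold $\mu$ replaced successively by $\mu,\mu N,\mu N^{2},\ldots,\mu N^{k-1}$. For brevity write
$$C_{j}=\left\{x\in\Omega:\mathcal{M}(|\nabla u|^{p})>\left(\tfrac{6}{7}\right)^{n}\mu^{p}N^{pj}\right\},\qquad D_{j}=\left\{x\in\Omega:\mathcal{M}(|F|^{p})>\left(\tfrac{6}{7}\right)^{n}\mu^{p}\delta^{p}N^{pj}\right\}.$$
The first key observation is that the hypothesis of Lemma~\ref{4-2} is preserved along the iteration: for each $j\geq 1$ one has $C_{j}\subset C_{1}$ by monotonicity of the level sets, so $\omega(C_{j})\leq\omega(C_{1})\leq\epsilon\,\omega(B_{r}(y_{i}))$ by the standing assumption \eqref{4.10}. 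Hence Lemma~\ref{4-2} applied with the parameter $\mu N^{k-1}$ in place of $\mu$ is legitimate and yields
$$\omega(C_{k})\;\leq\;\epsilon_{1}\omega(C_{k-1})+\epsilon_{1}\omega(D_{k-1}).$$

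The second ingredient is the elementary inequality $(a+b)^{\beta}\leq\max\{1,2^{\beta-1}\}(a^{\beta}+b^{\beta})$ valid for all $a,b\geq 0$ and $\beta>0$. Raising the previous display to the $\beta$-th power and recalling $\epsilon_{2}=\max\{1,2^{\beta-1}\}\epsilon_{1}^{\beta}$ gives the one-step recursion
$$\omega(C_{k})^{\beta}\;\leq\;\epsilon_{2}\,\omega(C_{k-1})^{\beta}+\epsilon_{2}\,\omega(D_{k-1})^{\beta}.$$

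With this recursion in hand the induction is routine. The base case $k=1$ is exactly the displayed inequality with $C_{0}$ and $D_{0}$. Assuming the claimed bound for $k-1$, substituting it into the recursion and reindexing the resulting sum ($j=i+1$) collapses the expression to
$$\omega(C_{k})^{\beta}\leq \epsilon_{2}^{k}\omega(C_{0})^{\beta}+\sum_{j=1}^{k}\epsilon_{2}^{j}\omega(D_{k-j})^{\beta},$$
which is exactly the statement of the corollary.

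There is no serious obstacle: the only point that requires care is the verification that the hypothesis of Lemma~\ref{4-2} remains valid at every stage of the iteration, which as noted above is immediate from the nesting $C_{k}\subset C_{k-1}\subset\cdots\subset C_{1}$ together with \eqref{4.10}. Everything else is bookkeeping of indices and a single-use of the standard power inequality used to convert $(\epsilon_{1}(\,\cdot\,+\cdot))^{\beta}$ into $\epsilon_{2}(\,\cdot^{\beta}+\cdot^{\beta})$.
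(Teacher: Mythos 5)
Your proof is correct, and it follows essentially the same route as the paper's: induction on $k$, with Lemma~\ref{4-2} providing the one-step bound and the elementary power inequality $(a+b)^{\beta}\leq\max\{1,2^{\beta-1}\}(a^{\beta}+b^{\beta})$ converting it into a recursion on $\omega(C_{j})^{\beta}$. The only cosmetic difference is that where you apply Lemma~\ref{4-2} with $\mu$ replaced by $\mu N^{j}$, the paper achieves the same effect by rescaling the solution and data to $u_{N}=u/N$, $F_{N}=F/N$ and invoking the inductive hypothesis for the rescaled pair; in both cases the hypothesis~\eqref{4.10} is seen to propagate along the iteration via the nesting $C_{j}\subset C_{1}$. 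Your formulation, which first isolates the one-step recursion $\omega(C_{k})^{\beta}\leq\epsilon_{2}\omega(C_{k-1})^{\beta}+\epsilon_{2}\omega(D_{k-1})^{\beta}$ and then unrolls it, is if anything slightly cleaner than the paper's nested application of the full $k$-step inductive hypothesis followed by a separate use of the $k=1$ case.
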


\begin{proof}
We now prove this corollary by induction. The case $k=1$ follows from Lemma\ref{4-2}, suppose now that the conclusion is true for some $k>1$. Let $u_{N}=\frac{u}{N}$ and $f_{N}=\frac{f}{N}$, we discover that
\begin{eqnarray}
   \omega\left(\left\{x\in\Omega:\mathcal{M}(|\nabla u_{N}|^{p})>\left(\frac{6}{7}\right)^{n}\mu^{p}N^{p}\right\}\right)
   &=&  \omega\left(\left\{x\in\Omega:\mathcal{M}(|\nabla u|^{p})>\left(\frac{6}{7}\right)^{n}\mu^{p}N^{2p}\right\}\right)\nonumber\\
   &\leq& \omega\left(\left\{x\in\Omega:\mathcal{M}(|\nabla u|^{p})>\left(\frac{6}{7}\right)^{n}\mu^{p}N^{p}\right\}\right) \nonumber\\
   &\leq& \epsilon \omega(B_{r}(y_{i}))
\end{eqnarray}
for $i=1,\cdots,L$. Where the second inequality holds because of $N>1$ and the last one is due to assumption \eqref{4.10}. Now by induction assumption it follows that
\begin{eqnarray*}
 && \omega\left(\left\{x\in\Omega:\mathcal{M}(|\nabla u|^{p})>\left(\frac{6}{7}\right)^{n}\mu^{p}N^{p(k+1)}\right\}\right)^{\beta}\nonumber\\
   &=& \omega\left(\left\{x\in\Omega:\mathcal{M}(|\nabla u_{N}|^{p})>\left(\frac{6}{7}\right)^{n}\mu^{p}N^{pk}\right\}\right)^{\beta} \nonumber\\
   &\leq& \epsilon_{2}^{k}\omega\left(\left\{x\in\Omega:\mathcal{M}(|\nabla u_{N}|^{p})>\left(\frac{6}{7}\right)^{n}\mu^{p}\right\}\right)^{\beta}
   + \sum_{i=1}^k\epsilon_{2}^{i}\omega\left(\left\{x\in\Omega:\mathcal{M}(|F_{N}|^{p})>\left(\frac{6}{7}\right)^{n}\mu^{p}\delta^{p}N^{p(k-i)}\right\}\right)^{\beta} \nonumber\\
   &=& \epsilon_{2}^{k}\omega\left(\left\{x\in\Omega:\mathcal{M}(|\nabla u|^{p})>\left(\frac{6}{7}\right)^{n}\mu^{p}N^{p}\right\}\right)^{\beta}
   + \sum_{i=1}^k\epsilon_{2}^{i}\omega\left(\left\{x\in\Omega:\mathcal{M}(|F|^{p})>\left(\frac{6}{7}\right)^{n}\mu^{p}\delta^{p}N^{p(k+1-i)}\right\}\right)^{\beta} \nonumber\\
   &\leq& \epsilon_{2}^{k}\left(\epsilon_{2}\omega\left(\left\{x\in\Omega:\mathcal{M}(|\nabla u|^{p})>\left(\frac{6}{7}\right)^{n}\mu^{p}\right\}\right)^{\beta}
   +\epsilon_{2}\omega\left(\left\{x\in\Omega:\mathcal{M}(|F|^{p})>\left(\frac{6}{7}\right)^{n}\mu^{p}\delta^{p}\right\}\right)^{\beta}\right)  \nonumber\\
   &+& \sum_{i=1}^k\epsilon_{2}^{i}\omega\left(\left\{x\in\Omega:\mathcal{M}(|F|^{p})>\left(\frac{6}{7}\right)^{n}\mu^{p}\delta^{p}N^{p(k+1-i)}\right\}\right)^{\beta} \nonumber\\
   &=& \epsilon_{2}^{k+1}\omega\left(\left\{x\in\Omega:\mathcal{M}(|\nabla u|^{p})>\left(\frac{6}{7}\right)^{n}\mu^{p}\right\}\right)^{\beta}
   + \sum_{i=1}^{k+1}\epsilon_{2}^{i}\omega\left(\left\{x\in\Omega:\mathcal{M}(|F|^{p})>\left(\frac{6}{7}\right)^{n}\mu^{p}\delta^{p}N^{p{k+1-i}}\right\}\right)^{\beta}
\end{eqnarray*}
Here we have used the case $k=1$ to the first term in the forth inequality. Hence we complete the proof of the corollary.
\end{proof}

\section{Weighted Lorentz estimates.}\label{section4}

Before proving the main result, we provide some elementary estimates that will be crucial for obtaining the Calder\'on-Zygmund type estimates.
\begin{lemma}\label{5-2}{\rm (cf.\cite{nguyen2016interior}\cite{tolksdorf1984regularity})}.
Let $p>1$ and $\Omega\subset\mathbb{R}^{n}$ be a bounded open set. Assume that $A(x,z,\xi)$ satisfies \eqref{growth}. Then for any $\xi_{1},\xi_{2}\in W^{1,p}(\Omega)$ and any nonnegative function $\phi\in C(\overline{\Omega})$, it holds that
\begin{enumerate}[(1)]
  \item If $1<p<2$, then for any $\tau>0$,
  \begin{eqnarray*}
    \int_{\Omega}|\nabla\xi_{1}-\nabla\xi_{2}|^{p}\phi\operatorname{d}\!x
    &\leq& \tau\int_{\Omega}|\nabla\xi_{1}|^{p}\phi\operatorname{d}\!x \\
    &+&C(\tau,p,\Lambda)\int_{\Omega}\left<A(x,\xi_{1},\nabla\xi_{1})-A(x,\xi_{2},\nabla\xi_{2}),\nabla\xi_{1}-\nabla\xi_{2}\right>\phi\operatorname{d}\!x
  \end{eqnarray*}
  \item If $p\geq2$, then
  \begin{equation*}
    \int_{\Omega}|\nabla\xi_{1}-\nabla\xi_{2}|^{p}\phi\operatorname{d}\!x\leq C(p,\Lambda)\int_{\Omega}\left<A(x,\xi_{1},\nabla\xi_{1})-A(x,\xi_{1},\nabla\xi_{2}),\nabla\xi_{1}-\nabla\xi_{2}\right>\phi\operatorname{d}\!x.
  \end{equation*}
\end{enumerate}
\end{lemma}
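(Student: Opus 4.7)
The overall strategy is to derive pointwise monotonicity inequalities from the ellipticity condition \eqref{growth}(2) and then branch the argument according to whether $p\geq 2$ or $1<p<2$. In both cases the starting point is the fundamental theorem of calculus applied to $t\mapsto A(x,z,\xi_t)$, where $\xi_t=t\xi_1+(1-t)\xi_2$:
\begin{equation*}
\langle A(x,z,\xi_1)-A(x,z,\xi_2),\xi_1-\xi_2\rangle=\int_0^1\langle\partial_\xi A(x,z,\xi_t)(\xi_1-\xi_2),\xi_1-\xi_2\rangle\,dt\geq \Lambda^{-1}|\xi_1-\xi_2|^2\int_0^1|\xi_t|^{p-2}\,dt.
\end{equation*}
All subsequent work is to estimate the integral $\int_0^1|\xi_t|^{p-2}\,dt$ in a form suited to the target inequality.

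For part (2) with $p\geq 2$, I would observe that along the segment $[\xi_2,\xi_1]$ at least one sub-interval has $|\xi_t|\geq\tfrac{1}{2}|\xi_1-\xi_2|$, which is an elementary geometric fact (one of $|\xi_1|,|\xi_2|$ must be at least $\tfrac{1}{2}|\xi_1-\xi_2|$, and for $t$ near the corresponding endpoint $|\xi_t|$ stays above $\tfrac{1}{2}|\xi_1-\xi_2|$). Since $p-2\geq 0$, this gives $\int_0^1|\xi_t|^{p-2}\,dt\geq c(p)|\xi_1-\xi_2|^{p-2}$, hence the pointwise bound $\langle A(x,\xi_1,\xi_1)-A(x,\xi_1,\xi_2),\xi_1-\xi_2\rangle\geq c(p,\Lambda)|\xi_1-\xi_2|^p$. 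Multiplying by $\phi\geq 0$ and integrating over $\Omega$ yields (2) directly.

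For part (1) with $1<p<2$, the exponent $p-2<0$ combined with the trivial bound $|\xi_t|\leq|\xi_1|+|\xi_2|$ yields $\int_0^1|\xi_t|^{p-2}\,dt\geq(|\xi_1|+|\xi_2|)^{p-2}$, so
\begin{equation*}
\langle A(x,z,\xi_1)-A(x,z,\xi_2),\xi_1-\xi_2\rangle\geq \Lambda^{-1}(|\xi_1|+|\xi_2|)^{p-2}|\xi_1-\xi_2|^2.
\end{equation*}
To recover $|\xi_1-\xi_2|^p$, I would write
\begin{equation*}
|\xi_1-\xi_2|^p=\bigl[(|\xi_1|+|\xi_2|)^{p-2}|\xi_1-\xi_2|^2\bigr]^{p/2}\,(|\xi_1|+|\xi_2|)^{p(2-p)/2}
\end{equation*}
and apply Young's inequality with conjugate exponents $2/p$ and $2/(2-p)$ to split this as $C(\tau)(|\xi_1|+|\xi_2|)^{p-2}|\xi_1-\xi_2|^2+\tau(|\xi_1|+|\xi_2|)^p$. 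Expanding $(|\xi_1|+|\xi_2|)^p\leq C(|\xi_1|^p+|\xi_1-\xi_2|^p)$ via the triangle inequality $|\xi_2|\leq|\xi_1|+|\xi_1-\xi_2|$ and choosing $\tau$ small enough to absorb the resulting $|\xi_1-\xi_2|^p$ term to the left, one obtains the pointwise bound $|\xi_1-\xi_2|^p\leq \tau|\xi_1|^p+C(\tau,p,\Lambda)\langle A(x,z,\xi_1)-A(x,z,\xi_2),\xi_1-\xi_2\rangle$.

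The main obstacle is that part (1) involves different $z$-entries $A(x,\xi_1,\nabla\xi_1)-A(x,\xi_2,\nabla\xi_2)$, whereas the pointwise argument above produces a same-$z$ monotonicity bound. To close the gap I would insert the intermediate term $A(x,\xi_1,\nabla\xi_2)$, control the cross-difference $|A(x,\xi_1,\nabla\xi_2)-A(x,\xi_2,\nabla\xi_2)|\leq \omega_M(|\xi_1-\xi_2|)|\nabla\xi_2|^{p-1}$ through the continuity condition \eqref{c-condition}, and absorb the resulting error via Young's inequality into the $\tau\int|\nabla\xi_1|^p\phi\,dx$ budget together with the monotonicity term. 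The case $p\geq 2$ in part (2) avoids this altogether because both occurrences of $A$ share the first argument $\xi_1$.
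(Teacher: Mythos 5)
The paper does not give a proof of Lemma~\ref{5-2} at all; it simply cites \cite{nguyen2016interior} and \cite{tolksdorf1984regularity}. Your proposal reconstructs the standard argument from those sources correctly in its core: the fundamental-theorem-of-calculus identity, the lower bound $\int_0^1|\xi_t|^{p-2}\,dt\ge c(p)|\xi_1-\xi_2|^{p-2}$ for $p\ge2$ (your geometric observation, while slightly loosely stated, is sound — at least one endpoint has modulus $\ge\tfrac12|\xi_1-\xi_2|$, so $|\xi_t|\ge\tfrac14|\xi_1-\xi_2|$ on an interval of length $\tfrac14$), and the Young/absorption scheme with the bound $\int_0^1|\xi_t|^{p-2}\,dt\ge(|\xi_1|+|\xi_2|)^{p-2}$ for $1<p<2$. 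These are exactly the steps in Tolksdorf-type lemmas.

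The one genuine subtlety is the one you flag in your last paragraph, and it is worth being precise about it. Part~(1) of Lemma~\ref{5-2}, as printed, pairs $A(x,\xi_1,\nabla\xi_1)$ against $A(x,\xi_2,\nabla\xi_2)$, with \emph{different} $z$-entries, yet the lemma's hypothesis is only the ellipticity/growth condition \eqref{growth} and the constant is claimed to be $C(\tau,p,\Lambda)$. That cannot be right as it stands: \eqref{growth} gives no control over the $z$-dependence, and one can make $\langle A(x,z_1,\xi_1)-A(x,z_2,\xi_2),\xi_1-\xi_2\rangle$ strictly negative (for example with $A(x,z,\xi)=a(z)|\xi|^{p-2}\xi$, $a$ bounded between $1/2$ and $3/2$, and suitably chosen $\xi_1,\xi_2$ aligned along the same ray with moduli in the regime $t<s<3^{1/(p-1)}t$), which defeats the inequality for small $\tau$. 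Your remedy — inserting $A(x,\xi_1,\nabla\xi_2)$ and using \eqref{c-condition} to control the cross-difference — is the right move if one insists on the printed $z$-entries, but it necessarily makes the constant depend on $\omega_M$ and $M$, and it additionally requires $\|\xi_1\|_\infty,\|\xi_2\|_\infty\le M$; none of this is reflected in the statement. The more economical reading, consistent with the stated hypotheses and with part~(2), is that part~(1) should also have $\xi_1$ in both $z$-slots, i.e. $A(x,\xi_1,\nabla\xi_1)-A(x,\xi_1,\nabla\xi_2)$; under that reading your main argument (without the final paragraph) already proves the lemma. Note that for the paper's only use of the lemma, in the proof of Lemma~\ref{5-1} with $\xi_2\equiv0$, both readings agree since $A(x,\cdot,0)=0$, so the discrepancy is harmless in context — but you were right to be suspicious of the printed form.
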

Global $L^{p}$ estimate of \eqref{model} is stated in the following theorem.

\begin{lemma}\label{5-1}
Assume $A(x,z,\xi)$ satisfies \eqref{growth}. Let $F\in L^{p}(\Omega,\mathbb{R}^{n})$ and $u\in W_{0}^{1,p}(\Omega)$ is a weak solution of \eqref{model}, then
\begin{equation*}
  \int_{\Omega}|\nabla u|^{p}\operatorname{d}\!x\leq C \int_{\Omega}|F|^{p}\operatorname{d}\!x
\end{equation*}
Where $C=C(n,p,\Lambda)$
\end{lemma}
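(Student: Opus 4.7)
The plan is to use $u$ itself as an admissible test function in the weak formulation of \eqref{model}, which is legitimate because $u\in W_0^{1,p}(\Omega)$. This yields the fundamental identity
\begin{equation*}
   \int_{\Omega}\langle A(x,u,\nabla u),\nabla u\rangle \operatorname{d}\!x
   =\int_{\Omega}\langle |F|^{p-2}F,\nabla u\rangle\operatorname{d}\!x,
\end{equation*}
so the task reduces to bounding the left-hand side from below by $c\int_{\Omega}|\nabla u|^p\operatorname{d}\!x$ and the right-hand side from above by a constant multiple of $\int_{\Omega}|F|^p\operatorname{d}\!x$ plus an absorbable term in $\int_{\Omega}|\nabla u|^p\operatorname{d}\!x$.

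For the coercivity step I would first observe that the growth bound in \eqref{growth} forces $A(x,z,0)=0$ (take $|\xi|\to0$), and then recover a pointwise lower bound on $\langle A(x,z,\xi),\xi\rangle$ by integrating the ellipticity estimate along the radial segment from $0$ to $\xi$:
\begin{equation*}
   \langle A(x,z,\xi),\xi\rangle
   =\int_{0}^{1}\langle\partial_{\xi}A(x,z,t\xi)\xi,\xi\rangle\operatorname{d}\!t
   \geq \Lambda^{-1}|\xi|^{p}\int_{0}^{1}t^{p-2}\operatorname{d}\!t
   =\frac{|\xi|^{p}}{\Lambda(p-1)}.
\end{equation*}
(The integral $\int_{0}^{1}t^{p-2}\operatorname{d}\!t$ converges precisely because $p>1$.) For the right-hand side I would combine Cauchy--Schwarz with Young's inequality with exponents $p'$ and $p$,
\begin{equation*}
   \int_{\Omega}\langle |F|^{p-2}F,\nabla u\rangle\operatorname{d}\!x
   \leq \varepsilon\int_{\Omega}|\nabla u|^{p}\operatorname{d}\!x
   +C(\varepsilon,p)\int_{\Omega}|F|^{p}\operatorname{d}\!x,
\end{equation*}
and then choose $\varepsilon>0$ small enough (say $\varepsilon=\tfrac{1}{2\Lambda(p-1)}$) so that the $|\nabla u|^{p}$ term can be absorbed into the left-hand side, giving the desired estimate with $C=C(n,p,\Lambda)$.

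I expect no substantive obstacle: the only point requiring a little care is the reduction of the coercivity of $\langle A,\xi\rangle$ from the ellipticity condition on $\partial_{\xi}A$, but this follows directly from the fundamental theorem of calculus once one notes that $A(x,z,0)=0$. The argument does not even use the continuity assumption \eqref{c-condition} in $z$, the Reifenberg flatness of $\Omega$, or the small-BMO condition on $A$ in $x$; only the structural hypothesis \eqref{growth} and $u\in W_{0}^{1,p}(\Omega)$ are needed, which is why the constant depends only on $n$, $p$, and $\Lambda$.
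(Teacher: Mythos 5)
Your proposal is correct and proves the lemma, but the coercivity step is handled differently from the paper. Both you and the paper start by testing the equation with $u$, bound the right-hand side by $\varepsilon\int_\Omega|\nabla u|^p\operatorname{d}\!x+C(\varepsilon,p)\int_\Omega|F|^p\operatorname{d}\!x$ via Young's inequality, and absorb the gradient term. The difference lies in how the left-hand side is bounded from below. The paper invokes its Lemma~\ref{5-2} (the Tolksdorf/Nguyen--Phan monotonicity estimate), writing $\langle A(x,u,\nabla u),\nabla u\rangle=\langle A(x,u,\nabla u)-A(x,u,0),\nabla u\rangle$ and applying that lemma with $\xi_2$ constant; this yields $\int_\Omega|\nabla u|^p\operatorname{d}\!x\leq C^*\int_\Omega\langle A(x,u,\nabla u),\nabla u\rangle\operatorname{d}\!x$. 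You instead derive a pointwise coercivity bound $\langle A(x,z,\xi),\xi\rangle\geq\frac{|\xi|^p}{\Lambda(p-1)}$ directly from the ellipticity in \eqref{growth}, integrating $\partial_\xi A$ along the ray $t\mapsto t\xi$ and using $A(x,z,0)=0$ (which indeed follows from the growth bound). Your route is more self-contained and elementary, and it cleanly handles the full range $p>1$ in one stroke, with the singularity $t^{p-2}$ at $t=0$ remaining integrable since $p>1$; the paper's route reuses a lemma it needs anyway for the approximation estimates, so neither is wasteful. One minor technical point worth making explicit in your version is that $t\mapsto A(x,z,t\xi)$ is absolutely continuous on $[0,1]$ (the derivative is dominated by $\Lambda t^{p-2}|\xi|^{p-1}\in L^1(0,1)$, and $A(x,z,t\xi)\to0$ as $t\to0^+$ by the growth bound), which justifies the fundamental-theorem-of-calculus identity despite $A$ being differentiable only for $\xi\neq0$.
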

\begin{proof}
Let $u$ as a test function of \eqref{model}, we have
\begin{eqnarray*}
  \int_{\Omega}\left<A(x,u,\nabla u)-A(x,u,0),\nabla u\right>\operatorname{d}\!x
   &=&  \int_{\Omega}\left<A(x,u,\nabla u),\nabla u\right>\operatorname{d}\!x \\
   &=& \int_{\Omega}\left<|F|^{p-2}F,\nabla u\right>\operatorname{d}\!x \\
   &\leq& \int_{\Omega}|F|^{p-1}|\nabla u|\operatorname{d}\!x \\
   &\leq& \tau\int_{\Omega}|\nabla u|^{p}\operatorname{d}\!x +C(\tau)\int_{\Omega}|F|^{p}\operatorname{d}\!x
\end{eqnarray*}
for $\forall\tau>0$, where we used Young inequality. Applying Lemma\ref{5-2}, we get

\begin{eqnarray*}
  \int_{\Omega}|\nabla u|^{p}\operatorname{d}\!x
    &\leq& C^{*}\int_{\Omega}\left<A(x,u,\nabla u)-A(x,u,0),\nabla u\right>\operatorname{d}\!x \\
    &\leq& C^{*}\tau\int_{\Omega}|\nabla u|^{p}\operatorname{d}\!x +C(\tau)\int_{\Omega}|F|^{p}\operatorname{d}\!x
\end{eqnarray*}
Choose $\tau=\frac{1}{2C^{*}}$, we have
\begin{equation*}
  \int_{\Omega}|\nabla u|^{p}\operatorname{d}\!x\leq C \int_{\Omega}|F|^{p}\operatorname{d}\!x
\end{equation*}
\end{proof}

With these preliminary estimates at hand, we may now proceed to the proof of the weighted regularity estimate.
\begin{proof}[Proof of Theorem \ref{1-1}]
We will consider only the case $t\neq\infty$, as for $t=\infty$, the proof is similar. Let $N=N(n,p,\Lambda)$ be defined as in Corollary\ref{4-3}. For $q>1$, take $\epsilon_{1}=\epsilon\left(\frac{10}{1-4\delta}\right)^{nq}[\omega]_{q}^{2}$, $\epsilon_{2}=max\left\{1,2^{\frac{t}{pq}-1}\right\}\epsilon_{1}^{\frac{t}{pq}}$, choose $\epsilon$ sufficiently small such that
\begin{equation}\label{5.1}
  \epsilon_{2}\Gamma^{\frac{t}{p}}=\frac{1}{2}
\end{equation}
Let $\delta=\delta(n,p,\Lambda,\epsilon,\gamma)$ is determined by Corollary\ref{4-3}. Assume that the assumptions of Theorem\ref{1-1} hold with this choice of $\delta$. Furthermore, assume that $u$ is a weak solution of \eqref{model}, we select a finite collection of points $\{y_{i}\}_{i=1}^{L}\subset\overline{\Omega}$ and a ball $B$ such that $\overline{\Omega}\subset\cup_{i=1}^{L}B_{r}(y_{i})\subset B$, where $r=\frac{R}{400\sigma}$. We now prove Theorem\ref{1-1} with the following additional assumption that
\begin{equation}\label{5.2}
  \omega\left(\left\{x\in\Omega:\mathcal{M}(|\nabla u|^{p})>\left(\frac{6}{7}\right)^{n}\mu^{p}N^{p}\right\}\right)\leq \epsilon\omega(B_{r}(y_{i}))
\end{equation}
Where $\mu=\tilde{C}\|\nabla u\|_{L^{p}(\Omega)}$ with some sufficiently large constant $\tilde{C}$ depending on $n,p,q,\Lambda,\gamma,\Omega,\epsilon$ which is to be determined later. For $t\neq\infty$, we now consider the sum
\begin{equation}\label{5.3}
   S=\sum_{k=1}^\infty N^{tk}\omega\left(\left\{x\in \Omega:\mathcal{M}(|\nabla u|^{p})>\left(\frac{6}{7}\right)^{n}\mu^{p}N^{pk}\right\}\right)^{\frac{t}{pq}}
\end{equation}
Let $\Gamma=N^{p}>1$, then
\begin{equation*}
  S=\sum_{k=1}^\infty \Gamma^{\frac{tk}{p}}\omega\left(\left\{x\in \Omega:\mathcal{M}(|\nabla u|^{p})>\left(\frac{6}{7}\right)^{n}\mu^{p}\Gamma^{k}\right\}\right)^{\frac{t}{pq}}
\end{equation*}
Owing to \eqref{5.2} and applying Corollary\ref{4-3}, take $\beta=\frac{t}{pq}$ we have
\begin{eqnarray}\label{5.4}
   S&\leq&\sum_{k=1}^\infty \Gamma^{\frac{kt}{p}}\sum_{i=1}^k \epsilon_{2}^{i}\omega\left(\left\{x\in \Omega:\mathcal{M}(|F|^{p})>\left(\frac{6}{7}\right)^{n}\mu^{p}\delta^{p}\Gamma^{k-i}\right\}\right)^{\frac{t}{pq}}  \nonumber \\
   &+& \sum_{k=1}^\infty \Gamma^{\frac{tk}{p}}\epsilon_{2}^{k}\omega\left(\left\{x\in \Omega:\mathcal{M}(|\nabla u|^{p})>\left(\frac{6}{7}\right)^{n}\mu^{p}\right\}\right)^{\frac{t}{pq}}
\end{eqnarray}
To control $S$, we employ Fubini's theorem and Lemma\ref{2-1} to calculate:
\begin{eqnarray}
  S &\leq& \sum_{j=1}^\infty \left(\Gamma^{\frac{t}{p}}\epsilon_{2}\right)^{j}\sum_{k=j}^\infty \Gamma^{\frac{t(k-j)}{p}}\omega\left(\left\{x\in \Omega:\mathcal{M}(|F|^{p})>\left(\frac{6}{7}\right)^{n}\mu^{p}\delta^{p}\Gamma^{k-j}\right\}\right)^{\frac{t}{pq}} \nonumber\\
   &+& \sum_{k=1}^\infty \left(\Gamma^{\frac{t}{p}}\epsilon_{2}\right)^{k}\omega\left(\left\{x\in \Omega:\mathcal{M}(|\nabla u|^{p})>\left(\frac{6}{7}\right)^{n}\mu^{p}\right\}\right)^{\frac{t}{pq}} \nonumber\\
   &\leq& C \sum_{j=1}^\infty\left(\Gamma^{\frac{t}{p}}\epsilon_{2}\right)^{j}\left(\|\mathcal{M}(|F_{\mu}|^{p})\|_{L_{\omega}^{q,t/p}(\Omega)}^{t/p}+\omega(\Omega)^{\frac{t}{pq}}\right)
\end{eqnarray}
where $F_{\mu}=\frac{F}{\mu}$. Note that the choice of $\epsilon_{2}$, applying the Lemma\ref{2-1} again, we obtain
\begin{equation}\label{5.6}
  \|\mathcal{M}(|\nabla u_{\mu}|^{p})\|_{L_{\omega}^{q,t/p}(\Omega)}^{t/p}\leq C\left(\|\mathcal{M}(|F_{\mu}|^{p})\|_{L_{\omega}^{q,t/p}(\Omega)}^{t/p}+\omega(\Omega)^{t/pq}\right)
\end{equation}
for a constant $C$ depending on $n,p,\Lambda,t$, where $u_{\mu}=\frac{u}{\mu}$. Also, by the Lebesgue's differentiation theorem and the definition of weighted Lorentz space, we see that
\begin{eqnarray}\label{5.7}
   \|\nabla u\|_{L_{\omega}^{pq,t}(\Omega)}^{p}
   &=& \mu^{p}\||\nabla u_{\mu}|^{p}\|_{L_{\omega}^{q,t/p}(\Omega)} \nonumber\\
   &\leq& \mu^{p}\|\mathcal{M}(|\nabla u_{\mu}|^{p})\|_{L_{\omega}^{q,t/p}(\Omega)} \nonumber\\
   &\leq& C\mu^{p}\left(\|\mathcal{M}(|F_{\mu}|^{p})\|_{L_{\omega}^{q,t/p}(\Omega)}+\omega(\Omega)^{\frac{1}{q}}\right)
\end{eqnarray}
Using the last inequality and Lemma\ref{2-2}, we obtain
\begin{equation}\label{5.8}
  \|\nabla u\|^{p}_{L_{\omega}^{pq,t}(\Omega)}\leq C\mu^{p}\left(\||F_{\mu}|^{p}\|_{L_{\omega}^{q,t/p}(\Omega)}+\omega(\Omega)^{\frac{1}{q}}\right)
  =C\left(\|F\|^{p}_{L_{\omega}^{pq,t}(\Omega)}+\mu^{p}\omega(\Omega)^{\frac{1}{q}}\right)
\end{equation}
Owing to the definition of $\mu$ and Lemma\ref{5-1}, we get that
\begin{equation}\label{5.8.1}
  \mu^{p}\omega(\Omega)^{\frac{1}{q}}=\tilde{C}\omega(\Omega)^{\frac{1}{q}}\|\nabla u\|^{p}_{L^{p}(\Omega)}\leq\tilde{C}\omega(\Omega)^{\frac{1}{q}}\|F\|^{p}_{L^{p}(\Omega)}
\end{equation}
By appealing to Lemma\ref{jie}, we get that there exists a constant $s=s(n,q,\gamma)$ such that $q-s>1$ and $\omega\in A_{q-s}$ with $[\omega]_{q-s}\leq C(n,q,\gamma)$.
Hence, we can estimate $\|F\|^{p}_{L^{p}(\Omega)}$ as follows.
\begin{eqnarray*}
 \|F\|^{p}_{L^{p}(\Omega)}
   &=& \int_{\Omega}|F|^{p}\omega^{\frac{1}{q-s}}\omega^{-\frac{1}{q-s}}\operatorname{d}\!x   \\
   &\leq& \left(\int_{\Omega}\left(|F|^{p}\omega^{\frac{1}{q-s}}\right)^{q-s}\operatorname{d}\!x\right)^{\frac{1}{q-s}}
          \left(\int_{\Omega}\left(\omega^{-\frac{1}{q-s}}\right)^{\frac{q-s}{q-s-1}}\operatorname{d}\!x\right)^{\frac{q-s-1}{q-s}}  \\
   &=&  \left(\int_{\Omega}|F|^{p(q-s)}\omega\operatorname{d}\!x\right)^{\frac{1}{q-s}}
          \left(\int_{\Omega}\omega^{-\frac{1}{q-s-1}}\operatorname{d}\!x\right)^{\frac{q-s-1}{q-s}}   \\
   &=&  \|F\|^{p}_{L^{p(q-s)}_{\omega}(\Omega)}
         \left(\int_{\Omega}\omega^{-\frac{1}{q-s-1}}\operatorname{d}\!x\right)^{\frac{q-s-1}{q-s}}  \\
   &\leq& C\omega(\Omega)^{\frac{1}{q-s}-\frac{1}{q}}\|F\|^{p}_{L^{pq,\infty}_{\omega}(\Omega)}
         \left(\int_{\Omega}\omega^{-\frac{1}{q-s-1}}\operatorname{d}\!x\right)^{\frac{q-s-1}{q-s}}\\
   &\leq& C\omega(\Omega)^{-\frac{1}{q}}\|F\|^{p}_{L^{pq,t}_{\omega}(\Omega)}[\omega]_{q-s}^{\frac{1}{q-s}}  \\
   &\leq& C\omega(\Omega)^{-\frac{1}{q}}\|F\|^{p}_{L^{pq,t}_{\omega}(\Omega)}
\end{eqnarray*}
Where we used H\"older inequality and embedding theorem as mentioned in Lemma\ref{embedding}. Plugging this and \eqref{5.8.1} into \eqref{5.8}, we end up with
\begin{equation*}
  \|\nabla u\|_{L^{pq,t}_{\omega}(\Omega)}\leq C\|F\|_{L^{pq,t}_{\omega}(\Omega)}
\end{equation*}
Summarizing the efforts, we complete the proof of the Theorem as long as we can prove \eqref{5.2}.
Let
\begin{equation*}
  E:=\left\{x\in\Omega:\mathcal{M}(|\nabla u|^{p})>\left(\frac{6}{7}\right)^{n}\mu^{p}N^{p}\right\}
\end{equation*}
Owing to Lemma\ref{strong doubling}, we have the following estimates.
\begin{equation}\label{5.12}
  \frac{\omega(E)}{\omega(B_{r}(y_{i}))}
  =\frac{\omega(E)}{\omega(B)}\cdot\frac{\omega(B)}{\omega(B_{r}(y_{i}))}
  \leq\gamma\frac{\omega(E)}{\omega(B)}\left(\frac{|B|}{|B_{r}(y_{i})|}\right)^{q}
  \leq C(n,\gamma)\left(\frac{|E|}{|B|}\right)^{\alpha}\left(\frac{|B|}{|B_{r}(y_{i})|}\right)^{q}
\end{equation}
Where $\alpha$ is the constant as in Lemma\ref{strong doubling}. Then by weak (1,1)-type estimate for maximal functions, there exists a constant such that
\begin{equation}\label{5.13}
  |E|\leq\frac{C(n)}{(\mu N)^{p}}\int_{\Omega}|\nabla u|^{p}\operatorname{d}\!x=\frac{C(n,p,\Lambda)}{\widetilde{C}^{p}}
\end{equation}
It follows that
\begin{equation}\label{5.14}
   \frac{\omega(E)}{\omega(B_{r}(y_{i}))}\leq C(n,p,q,\Lambda,\gamma,\Omega,\epsilon)\widetilde{C}^{-p\alpha}
\end{equation}
Now, we choose $\widetilde{C}$ sufficiently large such that
\begin{equation*}
   \omega(E)\leq \epsilon\omega(B_{r}(y_{i}))
\end{equation*}
which gives estimate \eqref{5.2} as desired.
\end{proof}

\section{Besov regularity for solutions of a class of special harmonic equations.}\label{section5}

In this section, we study the Besov regularity for solutions of \eqref{specialmodel}, in the process, Calder\'on-Zygmund estimate will play an important role. For the sake of convenience and simplicity, we take advantage of Calder\'on-Zygmund estimate in a special case of $F=0$, $p=2$, $t=q$, $\omega=1$ and $\omega_{M}(t)=t^{\alpha}$. In this case, \eqref{c-condition} and \eqref{growth} can be rewritten as
\begin{equation}\label{3.1}
  \left<A(x,z,\xi)-A(x,z,\eta),\xi-\eta\right>\geq\Lambda^{-1}|\xi-\eta|^{2}
\end{equation}
\begin{equation}\label{3.2}
  |A(x,z,\xi)-A(x,z,\eta)|\leq\Lambda|\xi-\eta|
\end{equation}
and
\begin{equation}\label{3.3}
  |A(x,z_{1},\xi)-A(x,z_{2},\xi)|\leq|z_{1}-z_{2}|^{\alpha}|\xi|
\end{equation}

Given a domain $\Omega\subset\mathbb{R}^{n}$, we say that $f$ belongs to the local Besov space $B_{p,q,loc}^{\alpha}$ if $\varphi f$ belongs to the global Besov space $B_{p,q}^{\alpha}(\mathbb{R}^{n})$ for any $\varphi\in C_{0}^{\infty}(\Omega)$. Besides, we have the following technical lemma (cf.\cite{clop2019besov}).

\begin{lemma}
A function $f\in L_{loc}^{p}(\Omega)$ belongs to the local Besov space $B_{p,q,loc}^{\alpha}$ if and only if
\begin{equation*}
  \left\|\frac{\Delta_{h}f}{|h|^{\alpha}}\right\|_{L^{q}(\frac{\operatorname{d}\!h}{|h|^{n}})}<\infty
\end{equation*}
for any ball $B\subset2B\subset\Omega$ with radius $r_{B}$. Where $\Delta_{h}f(x)=f(x+h)-f(x)$. Here the measure $\frac{\operatorname{d}\!h}{|h|^{n}}$ is restricted to the ball $B(0,r_{B})$ on the $h$-space.
\end{lemma}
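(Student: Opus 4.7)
The plan is to prove the equivalence by exploiting locality and linking the abstract definition of $B_{p,q,loc}^{\alpha}$ (namely, $\varphi f \in B_{p,q}^{\alpha}(\mathbb{R}^{n})$ for every $\varphi \in C_{0}^{\infty}(\Omega)$) with the concrete finite-difference integral stated in the lemma. The forward direction is immediate: given any ball $B$ with $2B \subset \Omega$, pick $\varphi \in C_{0}^{\infty}(\Omega)$ equal to $1$ on $2B$. For $x \in B$ and $|h| \leq r_{B}$ both $x$ and $x+h$ lie in $2B$, where $\varphi \equiv 1$, so $\Delta_{h}f(x) = \Delta_{h}(\varphi f)(x)$ on $B$. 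Integrating first in $x \in B$ and then in $h \in B(0,r_{B})$ against $\operatorname{d}\!h/|h|^{n}$ produces a quantity dominated by $[\varphi f]_{\dot{B}_{p,q}^{\alpha}(\mathbb{R}^{n})}$, which is finite by hypothesis.

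For the reverse implication I would fix any $\varphi \in C_{0}^{\infty}(\Omega)$ with compact support $K \subset \Omega$ and show $\varphi f \in B_{p,q}^{\alpha}(\mathbb{R}^{n})$. The main tool is the Leibniz-type identity for finite differences
\[
\Delta_{h}(\varphi f)(x) = \varphi(x+h)\,\Delta_{h}f(x) + f(x)\,\Delta_{h}\varphi(x),
\]
which separates the $f$-dependence from the $\varphi$-dependence. For $|h| \leq r_{0} := \tfrac{1}{4}\operatorname{dist}(K,\partial\Omega)$ both summands are supported in a fixed compact neighbourhood $K' \subset \Omega$ of $K$. Covering $K'$ by finitely many balls $B_{j}$ with $2B_{j} \subset \Omega$ and radius $\geq r_{0}$, the first term is controlled by $\|\varphi\|_{\infty}$ times the sum of the local hypotheses over the $B_{j}$. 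The second term is bounded pointwise by $\|\nabla\varphi\|_{\infty}|h|$, contributing an integrand of order $|h|^{p(1-\alpha)-n}$ times $\int_{K'}|f|^{p} < \infty$, which is integrable against $\operatorname{d}\!h/|h|^{n}$ near the origin precisely because $\alpha < 1$.

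For $|h| > r_{0}$ the trivial pointwise bound $|\Delta_{h}(\varphi f)(x)| \leq |\varphi f(x+h)| + |\varphi f(x)|$ yields an $L^{p}(\mathbb{R}^{n})$-norm bounded uniformly by $2\|\varphi f\|_{L^{p}(\mathbb{R}^{n})}$, and the weight $|h|^{-\alpha p - n}$ is integrable on $\{|h| > r_{0}\}$ since $\alpha > 0$. Combining the small-$h$ and large-$h$ estimates delivers finiteness of the full Besov semi-norm of $\varphi f$. The main difficulty I anticipate is not analytical but geometric: organising the covering of $K'$ by admissible balls so that the translates $x+h$ appearing in the Leibniz identity remain inside $\bigcup_{j} B_{j}$ uniformly in $|h| \leq r_{0}$, and so that the local hypothesis can be invoked on each $B_{j}$ with a uniform finite constant. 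Once the geometric parameters are calibrated the argument reduces to summing finitely many already-finite quantities.
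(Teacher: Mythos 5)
The paper does not prove this lemma: it states it with the citation to \cite{clop2019besov} and moves on. So there is no internal proof to compare against, and your proposal should be judged on its own terms.

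Your argument is correct and uses the standard machinery. The forward direction (choose $\varphi\equiv 1$ on $2B$ and observe $\Delta_h(\varphi f)=\Delta_h f$ on $B$ when $|h|\le r_B$) is right. For the converse, the discrete Leibniz identity
\[
\Delta_h(\varphi f)(x) = \varphi(x+h)\,\Delta_h f(x) + f(x)\,\Delta_h\varphi(x)
\]
is the key step, and the split into small and large $|h|$ is exactly what is needed: for $|h|\le r_0$ the first term is handled by a finite cover of $K'$ by balls of radius at least $r_0$ with $2B_j\subset\Omega$ (the elementary inequality $(\sum_j a_j)^{q/p}\le C(N,q/p)\sum_j a_j^{q/p}$ for a finite cover of cardinality $N$ is all that is needed), and the second term is $O(|h|)$ pointwise so the $h$-integrand is of order $|h|^{(1-\alpha)q-n}$ — you wrote $|h|^{p(1-\alpha)-n}$, a harmless typo since the exponent should carry $q$ after raising to the power $q/p$, and integrability near the origin still only uses $\alpha<1$. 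The large-$|h|$ tail is controlled by $\|\varphi f\|_{L^p}$ and $\alpha>0$. The geometric worry you flag at the end is actually a non-issue: the hypothesis only requires control of $\int_{B_j}|\Delta_h f|^p\,\mathrm{d}x$ for $|h|<r_{B_j}$, not that the translated point $x+h$ land in the same $B_j$; since $f\in L^p_{loc}(\Omega)$ and $2B_j\subset\Omega$, the translate is always in $\Omega$, which is all that is needed. So the proof goes through as written, modulo the exponent typo.
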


Next, we introduce some elementary estimates.
\begin{lemma}\label{3-3}
Suppose $1\leq p<\infty$, $u\in W^{1,p}(B_{R})$. Then, for each $0<\rho<R$, we have
\begin{equation*}
  \|\Delta_{h}u\|_{L^{p}(B_{\rho})}\leq C(n,p)|h|\|\nabla u\|_{L^{p}(B_{R})}
\end{equation*}
for all $0<|h|<\frac{R-\rho}{2}$.
\end{lemma}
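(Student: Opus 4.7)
The statement is the classical difference-quotient estimate for Sobolev functions, so the plan is to follow the standard proof: first establish the inequality for smooth functions via the fundamental theorem of calculus along line segments, then extend to $W^{1,p}(B_R)$ by density.

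First I would reduce to the smooth case. By standard approximation (Friedrichs mollification works on a slightly shrunk ball, or one may use the Meyers--Serrin theorem), one can pick $u_k \in C^\infty(B_R) \cap W^{1,p}(B_R)$ with $u_k \to u$ in $W^{1,p}(B_R)$. If the estimate is established for each $u_k$ with a constant independent of $k$, then both sides pass to the limit (the left side converges in $L^p(B_\rho)$, the right side in $L^p(B_R)$), giving the result for $u$.

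For smooth $u$ and $|h| < (R-\rho)/2$, I would write, for each $x \in B_\rho$,
\begin{equation*}
 u(x+h) - u(x) = \int_0^1 \nabla u(x+th) \cdot h \, dt,
\end{equation*}
which yields $|\Delta_h u(x)| \le |h| \int_0^1 |\nabla u(x+th)| \, dt$. Applying Jensen's inequality to the probability measure $dt$ on $[0,1]$, one gets $|\Delta_h u(x)|^p \le |h|^p \int_0^1 |\nabla u(x+th)|^p \, dt$. Integrating over $B_\rho$, swapping the two integrals by Fubini, and changing variables $y = x + th$ in the inner integral, I obtain
\begin{equation*}
 \|\Delta_h u\|_{L^p(B_\rho)}^p \le |h|^p \int_0^1 \int_{B_\rho + th} |\nabla u(y)|^p \, dy \, dt.
\end{equation*}
The key geometric observation is that for $t \in [0,1]$ and $x \in B_\rho$, the translate $x + th$ lies in $B_R$, since $|x + th| \le \rho + |h| < \rho + (R-\rho)/2 < R$. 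Thus $B_\rho + th \subset B_R$ for every $t \in [0,1]$, so the inner integral is bounded by $\|\nabla u\|_{L^p(B_R)}^p$, and the $t$-integration costs nothing. Taking $p$-th roots finishes the proof, in fact with $C(n,p) = 1$.

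There is no serious obstacle in this argument; the only thing to keep track of is the geometric containment $B_\rho + th \subset B_R$, which the hypothesis $|h| < (R-\rho)/2$ makes automatic (even the weaker $|h| < R-\rho$ would suffice, but the stated bound is convenient for applications).
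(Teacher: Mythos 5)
Your proof is correct and complete. The paper states this lemma without proof, treating it as a standard difference-quotient estimate for Sobolev functions; your argument (reduction to smooth functions by density, fundamental theorem of calculus along segments, Jensen's inequality, Fubini, change of variables, and the geometric containment $B_\rho + th \subset B_R$ guaranteed by $|h|<(R-\rho)/2$) is exactly the standard textbook proof, and your observation that $C(n,p)=1$ suffices is also correct.
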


\begin{lemma}
Let $A(x,z,\xi)$ satisfies \eqref{3.4}, \eqref{3.1}-\eqref{3.3}. Then $A(x,z,\xi)$ has small BMO semi-norm in $x$, i.e. \eqref{small BMO} holds.
\end{lemma}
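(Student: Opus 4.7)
The plan is to derive the small BMO bound \eqref{small BMO} essentially from the structural assumption \eqref{3.4} alone; conditions \eqref{3.1}--\eqref{3.3} play no active role here, so I would just check them off as background. The starting observation is that, since $p=2$, the denominator $|\xi|^{p-1}$ in the definition of $\theta(A,B_\rho(y))(x,z)$ is simply $|\xi|$, and \eqref{3.4} is precisely a pointwise bound of the correct form to cancel this $|\xi|$.

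First I would fix $y\in\mathbb{R}^n$, $\rho\in(0,R)$, $x\in B_\rho(y)$, and $z\in[-M,M]$, and write
\begin{equation*}
\bigl|A(x,z,\xi)-\bar A_{B_\rho(y)}(z,\xi)\bigr|
\le \fint_{B_\rho(y)}\bigl|A(x,z,\xi)-A(w,z,\xi)\bigr|\,dw.
\end{equation*}
Applying \eqref{3.4} inside the integrand and using $|x-w|\le 2\rho$ would give
\begin{equation*}
\bigl|A(x,z,\xi)-\bar A_{B_\rho(y)}(z,\xi)\bigr|
\le (2\rho)^\alpha|\xi|\Bigl(g(x)+\fint_{B_\rho(y)}g(w)\,dw\Bigr),
\end{equation*}
which, dividing by $|\xi|$ and taking the sup over $\xi\neq0$, is exactly a bound on $\theta(A,B_\rho(y))(x,z)$ that is independent of $z$.

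The next step would be to integrate this pointwise bound in $x$ over $B_\rho(y)$ and normalize, which collapses both terms into a single average of $g$:
\begin{equation*}
\fint_{B_\rho(y)}\theta(A,B_\rho(y))(x,z)\,dx
\le 2(2\rho)^\alpha\fint_{B_\rho(y)}g(w)\,dw.
\end{equation*}
Then I would apply H\"older's inequality with exponent $n/\alpha$ (this is exactly where the hypothesis $g\in L^{n/\alpha}_{\mathrm{loc}}$ is tailored to show up) to obtain
\begin{equation*}
\fint_{B_\rho(y)}g\,dw
\le |B_\rho|^{-\alpha/n}\Bigl(\int_{B_\rho(y)}g^{n/\alpha}\,dw\Bigr)^{\alpha/n}
\le C(n)\,\rho^{-\alpha}\Bigl(\int_{B_\rho(y)}g^{n/\alpha}\,dw\Bigr)^{\alpha/n}.
\end{equation*}
The factor $\rho^{-\alpha}$ cancels precisely the $\rho^\alpha$ picked up from \eqref{3.4}, so combining gives
\begin{equation*}
\fint_{B_\rho(y)}\theta(A,B_\rho(y))(x,z)\,dx
\le C(n,\alpha)\Bigl(\int_{B_\rho(y)}g^{n/\alpha}\,dw\Bigr)^{\alpha/n},
\end{equation*}
a bound uniform in $z\in[-M,M]$.

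Finally, since we are in the interior setting of Theorem \ref{1-2} (so the sup is effectively over $y$ in a bounded set) and $g^{n/\alpha}$ is locally integrable, the absolute continuity of the integral lets us choose $R=R(\delta,g)$ so small that $\int_{B_\rho(y)}g^{n/\alpha}\le(\delta/C)^{n/\alpha}$ uniformly in $y$ for all $\rho\le R$; this yields \eqref{small BMO}. The only mildly delicate point is this last uniformity step---one must ensure the sup over the centers $y$ stays in a compact set so that absolute continuity of the integral is uniform, which is legitimate because the BMO smallness is only needed on balls of radius at most $R$ meeting the region where regularity is being proved.
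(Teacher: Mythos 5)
Your argument is correct and is essentially the proof in the paper: both insert the average of $A(\cdot,z,\xi)$ and use the triangle inequality to reduce to an average of $|A(x,z,\xi)-A(w,z,\xi)|$, apply \eqref{3.4}, invoke H\"older with exponents $n/\alpha$ and $n/(n-\alpha)$ so that the power of $\rho$ cancels, and conclude by absolute continuity of the integral of $g^{n/\alpha}$. The only cosmetic difference is that you bound $|x-w|^\alpha\le(2\rho)^\alpha$ before applying H\"older to $\fint g$, whereas the paper keeps $(g(x)+g(w))|x-w|^\alpha$ inside a double average and applies H\"older directly there; the final bound and the cancellation of $\rho^\alpha$ are identical, and your closing remark about restricting the centers $y$ to a bounded set (the interior form of \eqref{small BMO}) makes explicit a uniformity point the paper leaves implicit.
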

\begin{proof}

\begin{eqnarray*}
  \fint_{B_{\rho}(y)}\theta(A,B_{\rho}(y))(x,z)\operatorname{d}\!x
   &=&\fint_{B_{\rho}(y)} \sup_{\xi\in\mathbb{R}^{n}\setminus\{0\}}\frac{|A(x,z,\xi)-\bar{A}_{B_{\rho}(y)}(z,\xi)|}{|\xi|}\operatorname{d}\!x \\
   &\leq& \fint_{B_{\rho}(y)} \sup_{\xi\in\mathbb{R}^{n}\setminus\{0\}}\fint_{B_{\rho}(y)}\frac{|A(x,z,\xi)-A(y,z,\xi)|}{|\xi|}\operatorname{d}\!y\operatorname{d}\!x \\
   &\leq&  \fint_{B_{\rho}(y)}\fint_{B_{\rho}(y)}(g(x)+g(y))|x-y|^{\alpha}\operatorname{d}\!y\operatorname{d}\!x \\
   &\leq&  \left(\fint_{B_{\rho}(y)}\fint_{B_{\rho}(y)}(g(x)+g(y))^{\frac{n}{\alpha}}\operatorname{d}\!y\operatorname{d}\!x\right)^{\frac{\alpha}{n}}
   \left(\fint_{B_{\rho}(y)}\fint_{B_{\rho}(y)}|x-y|^{\frac{n\alpha}{n-\alpha}}\operatorname{d}\!y\operatorname{d}\!x\right)^{\frac{n-\alpha}{n}} \\
   &\leq&  C(n,\alpha)\left(\int_{B_{\rho}(y)}g^{\frac{n}{\alpha}}\operatorname{d}\!x\right)^{\frac{\alpha}{n}}
\end{eqnarray*}
Where we used H\"older inequality. Thus, owing to the absolute continuity of the integral, we complete the proof.
\end{proof}

Now we proceed by proving Theorem \ref{1-2}
\begin{proof}[Proof of Theorem \ref{1-2}]
Fix a ball $B_{R}$ such that $B_{2R}\subset\subset\Omega$. Let $\eta\in C_{0}^{\infty}(B_{R})$ with $\eta=1$ on $B_{\frac{R}{2}}$ and $|\nabla\eta|\leq\frac{C}{R}$. For small enough $|h|$, given a test function $\varphi=\Delta_{-h}\left(\eta^{2}\triangle_{h}u\right)$, we test the equation\eqref{specialmodel} with $\varphi$, we have
\begin{equation*}
   \int_{\Omega}\left<A(x,u,\nabla u),\Delta_{-h}\nabla(\eta^{2}\Delta_{h}u)\right>\operatorname{d}\!x=0
\end{equation*}
Combine this and the ``integration-by-part'' formula for difference quotients, we get
\begin{equation}\label{3.5}
  \int_{\Omega}\left<\Delta_{h}A(x,u,\nabla u),\nabla(\eta^{2}\Delta_{h}u)\right>\operatorname{d}\!x=0
\end{equation}
We can write \eqref{3.5} as follows:
\begin{eqnarray*}
   &&\int_{\Omega}\left<A(x+h,u(x+h),\nabla u(x+h))-A(x+h,u(x+h),\nabla u(x)),\eta^{2}\nabla(\Delta_{h}u)\right>\operatorname{d}\!x      \nonumber\\
   &=&  -\int_{\Omega}\left<A(x+h,u(x+h),\nabla u(x+h))-A(x+h,u(x+h),\nabla u(x)),2\eta\nabla\eta\Delta_{h}u\right>\operatorname{d}\!x  \nonumber\\
   &+& \int_{\Omega}\left<A(x+h,u(x),\nabla u(x))-A(x+h,u(x+h),\nabla u(x)),\eta^{2}\nabla(\Delta_{h}u)\right>\operatorname{d}\!x \nonumber\\
   &+& \int_{\Omega}\left<A(x+h,u(x),\nabla u(x))-A(x+h,u(x+h),\nabla u(x)),2\eta\nabla\eta\Delta_{h}u\right>\operatorname{d}\!x \nonumber\\
   &+&  \int_{\Omega}\left<A(x,u(x),\nabla u(x))-A(x+h,u(x),\nabla u(x)),\eta^{2}\nabla(\Delta_{h}u)\right>\operatorname{d}\!x  \nonumber\\
   &+& \int_{\Omega}\left<A(x,u(x),\nabla u(x))-A(x+h,u(x),\nabla u(x)),2\eta\nabla\eta\Delta_{h}u\right>\operatorname{d}\!x  \nonumber\\
   &=& I_{1}+I_{2}+I_{3}+I_{4}+I_{5}
\end{eqnarray*}
Taking advantage of \eqref{3.1} in the left-hand side, we have
\begin{equation*}
  \Lambda^{-1}\int_{\Omega}|\Delta_{h}\nabla u|^{2}\eta^{2}\operatorname{d}\!x\leq |I_{1}|+|I_{2}|+|I_{3}|+|I_{4}|+|I_{5}|
\end{equation*}
Now, we estimate $I_{1}$-$I_{5}$ respectively. We proceed by estimating $I_{1}$ from \eqref{3.2} that
\begin{eqnarray*}
  |I_{1}| &\leq& 2\Lambda\int_{\Omega}|\Delta_{h}\nabla u||\eta||\nabla\eta||\Delta_{h}u|\operatorname{d}\!x \\
   &\leq& \epsilon\int_{\Omega}|\Delta_{h}\nabla u|^{2}\eta^{2}\operatorname{d}\!x +C(\epsilon,\Lambda)\int_{\Omega}|\nabla\eta|^{2}|\Delta_{h}u|^{2}\operatorname{d}\!x
\end{eqnarray*}
We use \eqref{3.3} and Young inequality as follows:
\begin{eqnarray*}
  |I_{2}| &\leq& \int_{\Omega}|\Delta_{h}u|^{\alpha}|\nabla u|\eta^{2}|\nabla(\Delta_{h}u)|\operatorname{d}\!x \\
   &\leq& \epsilon\int_{\Omega}|\Delta_{h}\nabla u|^{2}\eta^{2}\operatorname{d}\!x +C(\epsilon)\int_{\Omega}|\Delta_{h}u|^{2\alpha}|\nabla u|^{2}\eta^{2}\operatorname{d}\!x
\end{eqnarray*}
and
\begin{equation*}
  |I_{3}| \leq 2\int_{\Omega}|\Delta_{h}u|^{\alpha}|\nabla u|\eta|\nabla\eta||\Delta_{h}u|\operatorname{d}\!x
   = 2\int_{\Omega}|\Delta_{h}u|^{1+\alpha}|\nabla u|\eta|\nabla\eta|\operatorname{d}\!x
\end{equation*}
By virtue of assumption \eqref{3.4} and Young inequality, we have
\begin{eqnarray*}
  |I_{4}| &\leq& |h|^{\alpha}\int_{\Omega}(g(x+h)+g(x))|\nabla u(x)|\eta^{2}|\nabla(\Delta_{h}u)|\operatorname{d}\!x \\
   &\leq& \epsilon\int_{\Omega}|\nabla(\Delta_{h}u)|^{2}\eta^{2}\operatorname{d}\!x +C(\epsilon)|h|^{2\alpha}\int_{\Omega}(g(x+h)+g(x))^{2}|\nabla u(x)|^{2}\eta^{2}\operatorname{d}\!x
\end{eqnarray*}
and
\begin{eqnarray*}
  |I_{5}| &\leq& 2|h|^{\alpha}\int_{\Omega}(g(x+h)+g(x))|\nabla u(x)||\eta||\nabla\eta||\Delta_{h}u|\operatorname{d}\!x \\
   &\leq& C\int_{\Omega}|\Delta_{h}u|^{2}|\nabla\eta|^{2}\operatorname{d}\!x +C|h|^{2\alpha}\int_{\Omega}(g(x+h)+g(x))^{2}|\nabla u(x)|^{2}\eta^{2}\operatorname{d}\!x
\end{eqnarray*}
Collecting the above estimates, we get
\begin{eqnarray}\label{3.6}
  \int_{\Omega}|\Delta_{h}\nabla u|^{2}\eta^{2}\operatorname{d}\!x
   &\leq& C\int_{\Omega}|\nabla\eta|^{2}|\Delta_{h}u|^{2}\operatorname{d}\!x + C\int_{\Omega}|\Delta_{h}u|^{2\alpha}|\nabla u|^{2}\eta^{2}\operatorname{d}\!x \nonumber\\
   &+& C\int_{\Omega}|\Delta_{h}u|^{1+\alpha}|\nabla u|\eta|\nabla\eta|\operatorname{d}\!x  \nonumber\\
   &+&  C|h|^{2\alpha}\int_{\Omega}(g(x+h)+g(x))^{2}|\nabla u(x)|^{2}\eta^{2}\operatorname{d}\!x
\end{eqnarray}
From Lemma\ref{3-3} and the fact that $|\nabla\eta|\leq\frac{C}{R}$, the first term on the right-hand side can be estimated as:
\begin{equation*}
  \int_{B_{R}}|\nabla\eta|^{2}|\Delta_{h}u|^{2}\operatorname{d}\!x\leq\frac{|h|^{2}}{R^{2}}\int_{B_{R+|h|}}|\nabla u|^{2}\operatorname{d}\!x
\end{equation*}
Owing to H\"older inequality and Lemma\ref{3-3}, we obtain
\begin{eqnarray*}
  \int_{B_{R}}|\Delta_{h}u|^{2\alpha}|\nabla u|^{2}\eta^{2}\operatorname{d}\!x
   &\leq&  \left(\int_{B_{R}}|\Delta_{h}u|^{2}\operatorname{d}\!x\right)^{\alpha}\left(\int_{B_{R}}|\nabla u|^{\frac{2}{1-\alpha}}\operatorname{d}\!x\right)^{1-\alpha}\\
   &\leq&  C|h|^{2\alpha}\left(\int_{B_{R+|h|}}|\nabla u|^{2}\operatorname{d}\!x\right)^{\alpha}\left(\int_{B_{R}}|\nabla u|^{\frac{2}{1-\alpha}}\operatorname{d}\!x\right)^{1-\alpha}
\end{eqnarray*}
and
\begin{eqnarray*}
  \int_{B_{R}}|\Delta_{h}u|^{1+\alpha}|\nabla u|\eta|\nabla\eta|\operatorname{d}\!x
   &\leq& \left(\int_{B_{R}}|\Delta_{h}u|^{2}|\nabla\eta|^{\frac{2}{1+\alpha}}\operatorname{d}\!x\right)^{\frac{1+\alpha}{2}}\left(\int_{B_{R}}|\nabla u|^{\frac{2}{1-\alpha}}\operatorname{d}\!x\right)^{\frac{1-\alpha}{2}} \\
   &\leq& \frac{|h|^{1+\alpha}}{R}\left(\int_{B_{R+|h|}}|\nabla u|^{2}\operatorname{d}\!x\right)^{\frac{1+\alpha}{2}}\left(\int_{B_{R}}|\nabla u|^{\frac{2}{1-\alpha}}\operatorname{d}\!x\right)^{\frac{1-\alpha}{2}}
\end{eqnarray*}
The homogeneity of the equation together with Calder\'on-Zygmund estimate yield that $\nabla u\in L_{loc}^{s}(\Omega)$ for $\forall s>1$, see Theorem\ref{1-1-1} with $F=0$, $p=2$, $t=q$, $\omega=1$. In particular, $\nabla u\in L^{\frac{2}{1-\alpha}}(B_{R})$ and $\nabla u\in L^{\frac{2n}{n-2\alpha}}(B_{R})$. Thus, from H\"older inequality, we have
\begin{eqnarray*}
  \int_{B_{R}}(g(x+h)+g(x))^{2}|\nabla u(x)|^{2}\eta^{2}\operatorname{d}\!x
   &\leq& \left(\int_{B_{R}}(g(x+h)+g(x))^{\frac{n}{\alpha}}\operatorname{d}\!x \right)^{\frac{2\alpha}{n}}\left(\int_{B_{R}}|\nabla u|^{\frac{2n}{n-2\alpha}}\operatorname{d}\!x \right)^{\frac{n-2\alpha}{n}}\\
   &\leq&  C\left(\int_{B_{R+|h|}}g(x)^{\frac{n}{\alpha}}\operatorname{d}\!x \right)^{\frac{2\alpha}{n}}\left(\int_{B_{R}}|\nabla u|^{\frac{2n}{n-2\alpha}}\operatorname{d}\!x \right)^{\frac{n-2\alpha}{n}}
\end{eqnarray*}
Combining all this estimates and divide both side of \eqref{3.6} by $|h|^{2\alpha}$. Moreover, we use the fact that $\eta=1$ on $B_{\frac{R}{2}}$, then
\begin{eqnarray*}
  \int_{B_{\frac{R}{2}}}\left|\frac{\Delta_{h}\nabla u}{|h|^{\alpha}}\right|^{2}\operatorname{d}\!x
   &\leq& \frac{C|h|^{2-2\alpha}}{R^{2}}\int_{B_{R+|h|}}|\nabla u|^{2}\operatorname{d}\!x \\
   &+& C\left(\int_{B_{R+|h|}}|\nabla u|^{2}\operatorname{d}\!x\right)^{\alpha}\left(\int_{B_{R}}|\nabla u|^{\frac{2}{1-\alpha}}\operatorname{d}\!x\right)^{1-\alpha} \\
   &+& \frac{C|h|^{1-\alpha}}{R}\left(\int_{B_{R+|h|}}|\nabla u|^{2}\operatorname{d}\!x\right)^{\frac{1+\alpha}{2}}\left(\int_{B_{R}}|\nabla u|^{\frac{2}{1-\alpha}}\operatorname{d}\!x\right)^{\frac{1-\alpha}{2}} \\
   &+&  C\left(\int_{B_{R+|h|}}g(x)^{\frac{n}{\alpha}}\operatorname{d}\!x \right)^{\frac{2\alpha}{n}}\left(\int_{B_{R}}|\nabla u|^{\frac{2n}{n-2\alpha}}\operatorname{d}\!x \right)^{\frac{n-2\alpha}{n}}
\end{eqnarray*}
Now, we take supremum over all $h\in B_{\delta}$ for some $\delta<R$. Since $g\in L_{loc}^{\frac{n}{\alpha}}(\Omega)$, the proof of Theorem\ref{1-2} is complete.
\end{proof}


\end{document}